\numberwithin{equation}{section}
\theoremstyle{plain} \newtheorem{theorem}{Theorem}[section]
\theoremstyle{plain} \newtheorem{proposition}[theorem]{Proposition}
\theoremstyle{plain} \newtheorem{lemma}[theorem]{Lemma}
\theoremstyle{plain} \newtheorem{corollary}[theorem]{Corollary}
\theoremstyle{definition} 
\theoremstyle{definition} 
\theoremstyle{remark} \newtheorem{remark}[theorem]{Remark}
\theoremstyle{remark} \newtheorem{example}[theorem]{Example}
\newcommand{\1}{\mathbbm{1}}
\newcommand{\mr}{\mathbb R}
\newcommand{\mn}{\mathbb N}
\newcommand{\eee}{{\rm e}}
\newcommand{\mmp}{\mathbb{P}}
\newcommand{\me}{\mathbb{E}}
\newcommand{\dodn}{\overset{{\rm d}}\longrightarrow}
\renewcommand{\epsilon}{\varepsilon}
\newlength{\querylen}
\begin{document}
\title{Convolution powers of unbounded measures on the positive half-line}

\author{Dariusz Buraczewski}
\address{Dariusz Buraczewski, Mathematical Institute University of Wroclaw, Pl. Grunwaldzki 2/4
50-384 Wroclaw, Poland}\email{dbura@math.uni.wroc.pl}

\author{Alexander Iksanov}
\address{Alexander Iksanov, Faculty of Computer Science and Cybernetics, Taras Shev\-chen\-ko National University of Kyiv, 01601 Kyiv, Ukraine}
\email{iksan@univ.kiev.ua}

\author{Alexander Marynych}
\address{Alexander Marynych, Faculty of Computer Science and Cybernetics, Taras Shev\-chen\-ko National University of Kyiv, 01601 Kyiv, Ukraine}
\email{marynych@unicyb.kiev.ua}

\date{\today}
\thanks{DB was supported by the National Science Center, Poland (Opus, grant number 2020/39/B/ST1/00209). AM was supported by the University of Wroclaw in the framework of the programme `Initiative of Excellence~--~Research University' (IDUB)}

\begin{abstract}
For a right-continuous nondecreasing and unbounded function $V$ of at most exponential growth, which vanishes on the negative halfline, we investigate the asymptotic behavior of the Lebesgue-Stieltjes convolution powers $V^{\ast(j)}(t)$ as both $j$ and $t$ tend to infinity. We obtain a comprehensive asymptotic formula for $V^{\ast(j)}(t)$, which is valid across different regimes of simultaneous growth of $j$ and $t$. Our main technical tool is an exponential change of measure, which is a standard technique in the large deviations theory. Various applications of our result are given. 
\end{abstract}

\keywords{Change of measure, convolution powers, local limit theorem}
\subjclass[2020]{Primary: 60F10; Secondary: 44A35, 60J80}

\maketitle

\vspace{-0.5cm}

\section{Introduction}

Let $V:\mr\to[0,\infty)$ be a right-continuous and nondecreasing function vanishing on $(-\infty,0)$. The function $V$ can be thought of as the distribution function of a measure $\mu$ on the nonnegative half-line, that is,
$$
\mu([0,\,t])=V(t),\quad t\geq 0.
$$
Let $\mu^{\ast(j)}$ (respectively, $V^{\ast(j)}$) denote the $j$-th convolution power in the Lebesgue--Stieltjes sense of measure $\mu$ (respectively, of function $V$), so that
$$
\mu^{\ast(j)}([0,\,t])=V^{\ast(j)}(t)=\int_{[0,\,t]}V^{\ast(j-1)}(t-y){\rm d}V(y),\quad t\geq 0,\quad j\geq 2.
$$
Let $(t_j)_{j\in\mn}$ be a sequence of positive numbers which diverges to $+\infty$ as $j\to\infty$.
In this note we are interested in the asymptotic behavior of $V^{\ast(j)}(t_j)$ as $j\to\infty$. If
\begin{equation}\label{eq:finite_limit_intro}
\lim_{x\to\infty}V(x)\in(0,\infty),
\end{equation}
then, by simple rescaling, we may assume that the limit is equal to $1$ and $\mu$ is a probability measure. A basic result of probability theory, called the weak law of large numbers, then tells us that, for every $\varepsilon>0$,
\begin{equation}\label{eq:classic_wlln}
\lim_{j\to\infty}V^{\ast(j)}(({\tt m}-\varepsilon)j)=0,\quad \lim_{j\to\infty}V^{\ast(j)}(({\tt m}+\varepsilon)j)=1,
\end{equation}
where ${\tt m}:=\int_{[0,\,\infty)}x{\rm d}V(x)$ is assumed finite. Under the additional assumption ${\tt s}^2:=\int_{[0,\,\infty)}x^2{\rm d}V(x)-{\tt m}^2\in (0,\infty)$, the celebrated central limit theorem provides a refinement of~\eqref{eq:classic_wlln}:
\begin{equation}\label{eq:classic_clt}
\lim_{j\to\infty }V^{\ast(j)}({\tt m} j+x{\tt s}\sqrt{j})=\frac{1}{\sqrt{2\pi}}\int_{-\infty}^{x}\eee^{-y^2/2}{\rm d}y,\quad x\in\mathbb{R}.
\end{equation}
If ${\tt s}^2=\infty$ or even ${\tt m}=\infty$, so-called `stable' versions of~\eqref{eq:classic_clt} are still available under conditions of regular variation. This involves replacing $t_j=t_j(x)={\tt m} j+x{\tt s}\sqrt{j}$ on the left-hand side with another appropriate sequence defined by the rate at which $V(t)$ tends to $1$. Further results on the asymptotic behavior of $V^{\ast(j)}(t_j)$, for other classes of sequences $(t_j)_{j\in\mathbb{N}}$, follow from classic large- and moderate-deviations theory making the picture of asymptotic regimes of $V^{\ast(j)}(t_j)$ under assumption~\eqref{eq:finite_limit_intro} is more or less complete. For this circle of questions we refer to the classic book~\cite{Petrov}.

A related problem has been addressed in the articles~\cite{Diaconis+Saloff-Coste:2014, Greville:1974, Randles+Saloff-Coste:2015, Schoenberg:1953}. These are concerned with the asymptotic behavior of $W^{\ast(j)}(t_j)$ as $j\to\infty$. Here, $W:\mathbb{Z}\to\mathbb{C}$ is a function of a finite support, and $W^{\ast(k)}$ is the $k$-fold Lebesgue convolution of $W$ with itself defined by $W^{\ast(1)}:=W$ and $W^{\ast(k)}(t):=\sum_{s\in\mathbb{Z}}W^{\ast(k-1)}(t-s)W(s)$ for $k\geq 2$ and $t\in\mathbb{C}$. In this setting the connections with probability theory vanish. One particularly interesting aspect of these works is the appearance in the asymptotic behavior of $W^{\ast(j)}(t_j)$ of the functions $H_m$ with $\int_\mr \eee^{{\rm i}zx}H_m(x){\rm d}x=\exp(-z^m)$ for $z\in\mr$ and even $m\geq 2$ and $=\exp({\rm i}z^m)$ for $z\in\mr$ and odd $m\geq 3$. If the function $W$ is nonnegative, as in our setting, then the asymptotic shape is driven by the Gaussian density $H_2$, similarly to~\eqref{eq:classic_clt}, whereas if the function $W$ is complex-valued or real-valued and takes values of both signs, then $H_m$ may arise with any $m\geq 2$.  In~\cite{Bui+Randles:2022, Randles:2023, Randles+Saloff-Coste:2017} the asymptotic behavior of $W^{\ast(j)}(t_j)$ as $j\to\infty$ was investigated for some particular subclasses of functions $W:\mathbb{Z}^d\to \mathbb{C}$ satisfying $\sum_{x\in \mathbb{Z}^d}|W(x)|<\infty$. The arguments of all the papers mentioned in this paragraph are heavily based on Fourier analysis.

Returning to our setting of real-valued nondecreasing functions $V$, assume now that the limit in~\eqref{eq:finite_limit_intro} is equal to $+\infty$, meaning that the measure $\mu$ is infinite. As far as we know, under this assumption, the problem of our interest has not been investigated so far, and the present paper aims at (partly) closing this gap. Although our results differ significantly from those obtained in the aforementioned articles, they share some similarities with the classic theory of large deviations for probability measures, see, for instance, Theorem 1 in~\cite{Bahadur+Ranga_Rao} or Theorem 1 in~\cite{Petrov_LD}.

\section{The setting}
Throughout the paper we assume that
\begin{equation}\label{eq:infinite_limit_intro}
\lim_{x\to\infty}V(x)=+\infty,
\end{equation}
and that the sequence $(t_j)_{j\in\mn}$ grows at least linearly. Also we shall normally write $t$ for $t_j$. The standing assumption in what follows is
\begin{equation}\label{eq:finite_LS_transform}
\hat{V}(s):=\int_{[0,\infty)}\eee^{-s x}{\rm d}V(x)<\infty\quad\text{for some }s>0,
\end{equation}
which means that $\hat{V}(s')$ is also finite for all $s'\geq s$. Depending on $V$ the function $\hat{V}$ may be finite for all $s>0$ or $s\geq s_1$ or $s>s_1$ for some $s_1>0$. Denote by $\mathcal{D}$ the domain of $\hat V$, that is,
$$
\mathcal{D}:=\{s>0: \hat{V}(s)<\infty\}.
$$
It is a standard fact that if $\hat{V}(s)$ is defined for $s>s_0$, then it possesses an analytic continuation to the domain $\{s\in\mathbb{C}: {\rm Re}\,s>s_0\}$, see~\cite{Widder:1941}. Thus, the 
function
$$
s\mapsto\lambda(s):=\log \hat{V}(s)=\log \int_{[0,\,\infty)}\eee^{-sx}{\rm d}V(x)
$$
is well-defined in a sufficiently small complex domain containing ${\rm Int}\,\mathcal{D}$, the interior of $\mathcal{D}$. Here $\log$ is the principal branch of the complex logarithm.

The function $\lambda$ is infinitely differentiable on ${\rm Int}\,\mathcal{D}$. In particular, $-\lambda^\prime$ is a continuous function on ${\rm Int}\,\mathcal{D}$. Assumption
\eqref{eq:infinite_limit_intro} excludes the situation in which $V(x)=a_1\1_{[a_2,\infty)}(x)$ for $x\geq 0$ and some positive $a_1$ and $a_2$. The function $\lambda$ is then strictly convex on ${\rm Int}\,\mathcal{D}$ which entails that the function $-\lambda^\prime$ is strictly decreasing on ${\rm Int}\,\mathcal{D}$. Thus, for every $s\in (s_{-},s_{+})$,  where
$$
s_{-}:=\inf_{s\in {\rm Int}\,\mathcal{D}}(-\lambda'(s))\quad\text{and}\quad s_{+}:=\sup_{s\in {\rm Int}\,\mathcal{D}}(-\lambda'(s)),
$$
the equation $-\lambda'(x)=s$ has a unique solution. According to Lemma~\ref{lem:s_minus} in the Appendix
$$
s_{-}=\inf\{x>0:\,V(x)>0\}.
$$
In most applications $s_{+}=+\infty$, and the equation $-\lambda'(x)=s$ then has a unique solution for all $s>s_{-}$. Nevertheless, the situation where $s_{+}<\infty$ is also possible, see Example~\ref{example7} below.

Throughout the paper we assume that\footnote{Note that $V^{\ast (j)}(t)=0$ for $t<s_- j$.}
\begin{equation}\label{eq:range}
t/j \in (s_{-},s_{+})\quad\text{for all large enough }j\in\mn,
\end{equation}
and let $\kappa(j)=\kappa(j,t)$ be uniquely defined by the equation
\begin{equation}\label{eq:kappa_def}
-\lambda^\prime(\kappa(j))=t/j.
\end{equation}

Here are some examples of a function $V$ and the corresponding quantities $\lambda$ and $\kappa$.  Later on some of these will be explored further.

\begin{example}\label{example1}
Let $V(x)=b x^\alpha$ for $x\geq 0$ and some positive $b$ and $\alpha$. Then $\lambda(s)=\log (b \Gamma(\alpha+1))-\alpha \log s$, $-\lambda^\prime(s)=\alpha/s$ for $s>0$, $s_{-}=0$ and $s_{+}=\infty$. Hence, $\kappa(j)=\alpha j/t$ for $j\in\mn$.
\end{example}

\begin{example}\label{example2}
Let $V(x)=ax+b$ for $x\geq 0$ and some positive $a$ and $b$. Then $\lambda(s)=\log(b+a/s)$ and $-\lambda^\prime(s)=a/(bs^2+as)$ for $s>0$. Hence, $\kappa(j,t)=((a^2+4abj/t)^{1/2}-a)/(2b)$ for $j\in\mn$.
\end{example}

\begin{example}\label{example3}
Let $V(x)=(\log (x+1))^\alpha$ for $x\geq 0$ and some $\alpha>0$. Then $\mathcal{D}={\rm Int}\mathcal{D}=(0,\infty)$. By the Karamata Tauberian theorem, see Theorem 1.7.1 in \cite{BGT}, $\hat{V}(s)~\sim~\log^{\alpha}(1/s)$ and $\lambda(s)\sim \alpha\log\log (1/s)$, as $s\to 0+$. Moreover, by the monotone density theorem (a version of Theorem 1.7.2 in \cite{BGT}),
$$
-\lambda^\prime(s)\sim \frac{\alpha}{s\log (1/s)},\quad s\to 0+.
$$
Hence, $\kappa (j)\sim \alpha/((t/j)\log (t/j))$ as $j\to\infty$ and $t/j\to \infty$.
\end{example}

\begin{example}\label{example4}
Let ${\rm d}V(x)=2^{-1}x^{-1/2}\exp(x^{1/2})\1_{(0,\infty)}(x){\rm d}x$. Then, as $s\to 0+$, $\lambda(s)\sim 1/(4s)$ and $-\lambda^\prime(s)\sim 1/(4s^2)$. Hence, $\kappa(j,t)\sim (j/t)^{1/2}/2$ as $j\to\infty$ and $t/j\to \infty$.
\end{example}

\begin{example}\label{example5}
Let $V(x)=\eee^{ax}-1$ for $x\geq 0$ and some $a>0$. Then $\lambda(s)=\log a-\log(s-a)$ and $-\lambda^\prime(s)=1/(s-a)$ for $s>a$. Hence, $\kappa(j,t)=a+j/t$ for $j\in\mn$. Interestingly, there is an explicit formula for the convolutions of $V$
\begin{equation}\label{eq:v_j_shifted_expponents_explicit}
V^{\ast(j)}(x)=\frac{a^j}{(j-1)!}\int_0^{x} y^{j-1} \eee^{ay}{\rm d}y=\frac{1}{(j-1)!}\int_0^{a x} y^{j-1} \eee^{y}{\rm d}y,\quad t\geq 0,\quad j\in\mathbb{N}
\end{equation}
which follows by induction. Here is an even simpler formula $$(V_0\ast V^{\ast(j)})(x)=\frac{(ax)^j\eee^{ax}}{j!},\quad x\geq 0,\quad j\in\mathbb{N}_0:=\mathbb{N}\cup\{0\},$$ where $V_0(x)=\eee^{ax}$ for $x\geq 0$.
\end{example}

\begin{example}\label{example6}
Let $V(x)=\eee^{ax}$ for $x\geq 0$ and some $a>0$. Then $\lambda(s)=\log s-\log (s-a)$ and $-\lambda^\prime(s)=a/(s(s-a))$ for $s>a$. Hence, $\kappa(j,t)=(a+(a^2+4aj/t)^{1/2})/2$ for $j\in\mn$.
\end{example}

\begin{example}\label{example7}
Let ${\rm d}V(x)=(x+1)^{-2-\alpha}\eee^{x}\1_{(0,\infty)}(x){\rm d}x$ for some $\alpha>0$. Then $\mathcal{D}=[1,\infty)$, $\hat{V}(1)=1/(1+\alpha)$ and $-\hat{V}'(1)=\int_0^{\infty}x(x+1)^{-2-\alpha}{\rm d}x=1/(\alpha(1+\alpha))$. Thus, $-\lambda'(1)=1/\alpha$ meaning that $s_{+}=1/\alpha$ and $\kappa(j)$ is undefined for $\alpha t>j$.
\end{example}

\section{Main results}

We call $V$ arithmetic, if it is piecewise-constant with jumps at points belonging to the lattice $(\omega_1+\omega_2 n)_{n\in\mn}$ for some $\omega_1\geq 0$ and $\omega_2>0$, and nonarithmetic, otherwise. Put $a(j):=(j\lambda^{\prime\prime}(\kappa(j)))^{1/2}$ for $j\in\mn$.
\begin{theorem}\label{thm:main} Suppose that $j\to \infty$ and $t=t_j\to\infty$ in such a way that~\eqref{eq:range} holds true.

\noindent
(a) Assume that $\lim_{j\to\infty}\kappa(j)a(j)=\infty$ and 
$\lim_{j\to\infty}\eee^{-\gamma j}\kappa(j)a(j)=0$ for all $\gamma>0$. Put
$$
T_j:=\frac{|\lambda^\prime(\kappa(j))|^3}{\lambda^{\prime\prime}(\kappa(j))}+\frac{|\hat V^{\prime\prime\prime}(\kappa(j))|}{\lambda^{\prime\prime}(\kappa(j)) \hat V(\kappa(j))}.
$$
Suppose further that
\begin{equation}\label{eq:for BerryEsseen}
T_j=o(a(j)),\quad j\to\infty
\end{equation}
and, for every fixed $\gamma>0$,
\begin{equation}\label{eq:strongnonlattice}
\limsup_{j\to\infty}\sup_{|z|\geq \gamma}\Big|\frac{\hat V(\kappa (j)-{\rm i}z/T_j)}{\hat V(\kappa(j))}\Big|<1.
\end{equation}
Then
\begin{equation}\label{eq:thm_part_a_claim}
V^{\ast(j)}(t)~\sim~\frac{1}{(2\pi)^{1/2}}\frac{(\hat V(\kappa(j)))^j\eee^{t\kappa(j)}}{\kappa(j)a(j)},\quad j\to\infty.
\end{equation}

\noindent
\noindent (b) Assume that $\lim_{j\to\infty}\kappa(j)=\theta\in (0,\infty)$, $\lim_{j\to\infty}\lambda^{\prime\prime}(\kappa(j))=\sigma^2\in (0,\infty)$ and that $V$ is nonarithmetic. Then
\begin{equation}\label{eq:thm_part_b_claim}
V^{\ast(j)}(t)~\sim~\frac{1}{(2\pi\sigma^2)^{1/2}\theta}\frac{(\hat V(\kappa(j)))^j\eee^{t\kappa(j)}}{j^{1/2}},\quad j\to\infty.
\end{equation}
\end{theorem}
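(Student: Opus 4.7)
The approach is the exponential change of measure (Esscher tilting). For $s\in\Int\mathcal{D}$ introduce the probability measure $dF_s(x):=\hat V(s)^{-1}\eee^{-sx}dV(x)$ on $[0,\infty)$, and let $S_j^{(s)}:=X_1+\cdots+X_j$ denote a sum of $j$ i.i.d.\ copies drawn from $F_s$. A direct density calculation based on $dV(x)=\hat V(s)\eee^{sx}dF_s(x)$ and taking $j$-fold convolutions yields the exact identity
\begin{equation}\label{plan:tilt}
V^{\ast(j)}(t)\;=\;\hat V(s)^j\,\me\!\left[\eee^{s S_j^{(s)}};\,S_j^{(s)}\le t\right],\qquad s\in\Int\mathcal{D}.
\end{equation}
Taking $s=\kappa(j)$ centers the tilted sum at $t$ by \eqref{eq:kappa_def}, with $\Var S_j^{(\kappa(j))}=a(j)^2$. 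Writing $U_j:=(S_j^{(\kappa(j))}-t)/a(j)$ and substituting $y=t+a(j)u$ recasts \eqref{plan:tilt} as the Laplace-type integral
\begin{equation}\label{plan:int}
V^{\ast(j)}(t)\;=\;\hat V(\kappa(j))^j\,\eee^{\kappa(j)t}\!\int_{-t/a(j)}^{0}\!\eee^{\kappa(j)a(j)u}\,dP(U_j\le u),
\end{equation}
and everything reduces to an asymptotic evaluation of this integral.

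In part (a) the assumption $\kappa(j)a(j)\to\infty$ localizes the exponential kernel near $u=0$ on a window of width $(\kappa(j)a(j))^{-1}$. The heuristic is that if $U_j$ admits an approximate density $f_j$ with $f_j(u)\approx(2\pi)^{-1/2}$ uniformly on such a shrinking neighborhood of $0$, then \eqref{plan:int} equals $(2\pi)^{-1/2}/(\kappa(j)a(j))$ to leading order, which is exactly \eqref{eq:thm_part_a_claim}. Making this rigorous amounts to a local limit theorem for $U_j$ with uniform error on the scale $(\kappa(j)a(j))^{-1}$, which I would obtain by Fourier inversion applied to the characteristic function $\varphi_j$ of $U_j$. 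The range of integration in $z$ would be split into a central piece $|z|\lesssim a(j)/T_j$, handled by Taylor expansion of $\log\hat V$ around $\kappa(j)$ with remainder controlled by \eqref{eq:for BerryEsseen}, and an outer piece, where the identity $\hat V(\kappa(j)-{\rm i}z/T_j)/\hat V(\kappa(j))=\phi_{F_{\kappa(j)}}(z/T_j)$ converts \eqref{eq:strongnonlattice} into a uniform bound $|\varphi_j(z)|\le(1-\eta)^{j}$. The assumption $\eee^{-\gamma j}\kappa(j)a(j)\to 0$ for every $\gamma>0$ is what absorbs this outer contribution, and it also disposes of the truncation at $-t/a(j)$ in \eqref{plan:int}.

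In part (b) the parameters stabilize ($\kappa(j)\to\theta>0$ and $\lambda''(\kappa(j))\to\sigma^2>0$), so the tilted laws $F_{\kappa(j)}$ converge to a fixed probability distribution on $[0,\infty)$; nonarithmeticity of $V$ is inherited by every $F_s$ since $V$ and $F_s$ share the same support. Stone's classical local limit theorem for nonarithmetic i.i.d.\ sums then supplies the density approximation $f_j(u)\to(2\pi)^{-1/2}\eee^{-u^2/2}$ uniformly in $u$, with a routine perturbation argument handling the mild $j$-dependence of $F_{\kappa(j)}$. Since $\kappa(j)a(j)\sim\theta\sigma\sqrt{j}\to\infty$, the same Laplace analysis of \eqref{plan:int} applies and yields \eqref{eq:thm_part_b_claim}.

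The main obstacle is the uniform local limit theorem in part (a): both the underlying tilted distribution and its scaling depend on $j$, and the shrinking resolution $(\kappa(j)a(j))^{-1}$ on which the density must look essentially constant prevents any off-the-shelf citation. The hypotheses \eqref{eq:for BerryEsseen} and \eqref{eq:strongnonlattice} are crafted precisely at the scale $T_j$ needed to balance the central expansion against the tail decay in the Fourier inversion, and ensuring that those two estimates meet at that scale is the technically delicate part.
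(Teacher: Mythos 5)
Your proposal is correct and follows essentially the same route as the paper: the identity \eqref{plan:tilt} with $s=\kappa(j)$ is exactly the paper's change of measure (their $X_{j,t}$ is your $S_j^{(\kappa(j))}$), the reduction to the Laplace-type integral \eqref{plan:int} is their Proposition on the exponential functional, and the uniform local limit theorem for the triangular array at resolution $\delta(j)\asymp 1/\kappa(j)$ --- proved by Stone-type smoothing and Fourier inversion, with \eqref{eq:for BerryEsseen} controlling the central zone via a Berry--Esseen bound and \eqref{eq:strongnonlattice} plus the subexponential growth of $\kappa(j)a(j)$ killing the outer zone --- is precisely their key technical lemma. You have also correctly identified the roles of all hypotheses and the genuinely delicate point (the $j$-dependence of both the tilted law and the scale), so what remains is only the execution of the plan, not a new idea.
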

\begin{remark}
Note the under the assumptions of part (b) relations~\eqref{eq:for BerryEsseen} hold automatically.
\end{remark}

Denote by $p({\tt m}, {\tt s}^2)$ the density of a normal random variable with mean ${\tt m}\in\mr$ and variance ${\tt s}^2>0$ given by
$$
p({\tt m},{\tt s}^2; %\sigma^2;
y)=\frac{1}{(2\pi{\tt s}^2)^{1/2}} %\sigma\sqrt{2\pi}}
\eee^{-\frac{(y-{\tt m})^2}{2{\tt s}^2 %\sigma^2
}},\quad y\in\mr.
$$

\begin{corollary}\label{cor:lin_growth}
Assume that $V$ is nonarithmetic and that $t=\alpha j+y$ for some fixed $\alpha>s_{-}$ and some 
$y=y(j)$. Let $\theta$ be a unique solution to $-\lambda'(\theta)=\alpha$ and $\sigma^2:=\lambda''(\theta)$.

\noindent  If $y(j)=o(j^{2/3})$ as $j\to\infty$, then
\begin{equation}\label{eq:cor_j_linear_growth}
V^{\ast(j)}(t)~\sim~\frac{1}{\theta}\eee^{\theta t}(\hat V(\theta))^j\frac{1}{(2\pi\sigma^2j)^{1/2}}\eee^{-\frac{y^2}{2\sigma^2j}}=\frac{1}{\theta}\eee^{\theta t}(\hat V(\theta))^jp(0,\sigma^2 j;y),\quad j\to\infty.
%\frac{1}{\theta}\frac{e^{\theta t}}{\sigma\sqrt{2\pi j}}\hat{V}^j(\theta)\exp\left\{-\frac{y^2}{2\sigma^2 j}\right\}=\frac{1}{\theta}e^{\theta t}\hat{V}^j(\theta)p(0,\sigma^2 j;y),\quad j\to\infty.
\end{equation}

\noindent If $y(j)\sim c j^{2/3}$ as $j\to\infty$ for some $c>0$, then
\begin{equation}\label{eq:cor_j_linear_growth2}
V^{\ast(j)}(t)~\sim~ \theta^{-1}\eee^{-c^3\lambda^{\prime\prime\prime}(\theta)/(6\sigma^6)}\eee^{\theta t}(\hat V(\theta))^jp(0,\sigma^2 j;y),\quad j\to\infty.
\end{equation}
\end{corollary}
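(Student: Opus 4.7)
The approach is to invoke Theorem~\ref{thm:main}(b) and then expand the prefactor $(\hat V(\kappa(j)))^j\eee^{t\kappa(j)}$ around its value at $\theta$. Under either hypothesis one has $y=o(j)$, so $t/j=\alpha+y/j\to\alpha$; continuity of $-\lambda'$ and $\lambda''$ at $\theta\in\mathrm{Int}\,\mathcal{D}$ then gives $\kappa(j)\to\theta$ and $\lambda''(\kappa(j))\to\sigma^2$. Since $V$ is nonarithmetic, Theorem~\ref{thm:main}(b) applies and yields
$$
V^{\ast(j)}(t)\sim\frac{1}{(2\pi\sigma^2 j)^{1/2}\,\theta}\,(\hat V(\kappa(j)))^j\,\eee^{t\kappa(j)}.
$$
A short computation shows that the ratio of the desired right-hand side of~\eqref{eq:cor_j_linear_growth}, respectively~\eqref{eq:cor_j_linear_growth2}, to the display above equals $\eee^{-R_j}$, respectively $\eee^{-R_j-c^3\lambda'''(\theta)/(6\sigma^6)}$, where
$$
R_j:=j\bigl(\lambda(\kappa(j))-\lambda(\theta)\bigr)+t\bigl(\kappa(j)-\theta\bigr)+\frac{y^2}{2\sigma^2 j}.
$$
It therefore suffices to show $R_j\to 0$ when $y=o(j^{2/3})$ and $R_j\to-c^3\lambda'''(\theta)/(6\sigma^6)$ when $y\sim cj^{2/3}$.

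To analyze $R_j$, set $\epsilon_j:=\kappa(j)-\theta$. The defining identity $-\lambda'(\kappa(j))=\alpha+y/j$ combined with $-\lambda'(\theta)=\alpha$ and Taylor's formula for $\lambda'$ gives $-\sigma^2\epsilon_j-\tfrac12\lambda'''(\theta)\epsilon_j^2+O(\epsilon_j^3)=y/j$; inverting iteratively yields
$$
\epsilon_j=-\frac{y}{\sigma^2 j}-\frac{\lambda'''(\theta)\,y^2}{2\sigma^6\,j^2}+O(y^3/j^3).
$$
Expanding $\lambda(\kappa(j))$ around $\theta$ to order three with remainder and using $\lambda'(\theta)=-\alpha$ together with $t=\alpha j+y$ to cancel the $\epsilon_j$-linear terms, one obtains
$$
j\bigl(\lambda(\kappa(j))-\lambda(\theta)\bigr)+t(\kappa(j)-\theta)=\tfrac{j\sigma^2}{2}\epsilon_j^2+y\epsilon_j+\tfrac{j\lambda'''(\theta)}{6}\epsilon_j^3+O(j\epsilon_j^4).
$$
Substituting the expansion of $\epsilon_j$: the $y^2/j$ contributions from the first two summands combine to $-y^2/(2\sigma^2 j)$, cancelling the corresponding term in $R_j$; the three $y^3/j^2$ contributions collapse to $-\lambda'''(\theta)\,y^3/(6\sigma^6 j^2)$; the remainder is $O(y^4/j^3)$. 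Hence
$$
R_j=-\frac{\lambda'''(\theta)\,y^3}{6\sigma^6\,j^2}+O(y^4/j^3),\qquad j\to\infty.
$$

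Under the standing assumption $y=O(j^{2/3})$ the remainder is $O(j^{-1/3})=o(1)$, so both cases follow at once: when $y=o(j^{2/3})$ also $y^3/j^2\to 0$, giving $R_j\to 0$ and hence~\eqref{eq:cor_j_linear_growth}; when $y\sim cj^{2/3}$ one has $y^3/j^2\to c^3$, giving $R_j\to-c^3\lambda'''(\theta)/(6\sigma^6)$ and hence~\eqref{eq:cor_j_linear_growth2}. The only real work is the Taylor bookkeeping, which is routine because $\lambda$ is infinitely differentiable at $\theta\in\mathrm{Int}\,\mathcal{D}$; the need to carry the expansion of $\epsilon_j$ to second order and that of $\lambda$ to third order is precisely what forces the $O(j^{2/3})$ threshold separating the two regimes and produces the cubic correction in~\eqref{eq:cor_j_linear_growth2}.
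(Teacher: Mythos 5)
Your proof is correct and follows essentially the same route as the paper: apply Theorem~\ref{thm:main}(b), invert the defining equation $-\lambda'(\kappa(j))=\alpha+y/j$ to expand $\kappa(j)-\theta$ to second order in $y/j$, and Taylor-expand the exponent $j\lambda(\kappa(j))+t\kappa(j)$ to third order, with the cubic term $-\lambda'''(\theta)y^3/(6\sigma^6 j^2)$ producing the threshold at $y\asymp j^{2/3}$ and the constant in~\eqref{eq:cor_j_linear_growth2}. The only (cosmetic) difference is that you carry the higher-order expansion uniformly and handle both regimes at once, whereas the paper treats the $o(j^{2/3})$ case with a coarser expansion first and then adds the extra term.
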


If $\lim_{j\to\infty} j^{-2/3}y(j)=\infty$, formulae \eqref{eq:cor_j_linear_growth} and \eqref{eq:cor_j_linear_growth2} are no longer valid. Actually, in the setting of Corollary \ref{cor:lin_growth} the asymptotic behavior of $V^{\ast(j)}(t)$ exhibits infinitely many phase transitions, which are indexed by $r\in\mn$ and depend on whether $y(j)=o(j^{(r+1)/(r+2)})$ or $y(j)\sim {\rm const}\cdot j^{(r+1)/(r+2)}$ as $j\to\infty$. An inspection of the proof of Corollary \ref{cor:lin_growth} makes this observation crystal clear.

An interesting problem is to find a sequence $(t_j)_{j\in\mn}$ such that $V^{\ast(j)}(t_j)$ converges to a finite positive constant. This is accomplished in the following corollary which can also be regarded as a version of~\eqref{eq:classic_clt}.

\begin{corollary}\label{cor:clt}
Assume that there exists $\theta^{\ast}\in \mathcal{D}$ such that $\lambda(\theta^{\ast})=\theta^{\ast}\lambda'(\theta^{\ast})$ and $V$ is nonarithmetic. Fix $y\in\mr$ and let $t=t(j,y)$ be a function satisfying
\begin{equation}\label{eq:threshold}
-\lambda'\left(\theta^{\ast}-\frac{\log j - y + \varepsilon(j)}{2j\theta^{\ast}\lambda''(\theta^{\ast})}\right)=\frac{t}{j},\quad j\to\infty
\end{equation}
for some $\varepsilon(j)=o(1)$, $j\to\infty$. Then, with $\sigma^2:=\lambda''(\theta^{\ast})$,
\begin{equation}\label{cor:clt_claim}
\lim_{j\to\infty}V^{\ast(j)}(t)~=~\frac{1}{(2\pi\sigma^2)^{1/2}\theta^{\ast} %\sigma\sqrt{2\pi}
}\eee^{-y/2}.
\end{equation}
\end{corollary}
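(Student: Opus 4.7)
The plan is to apply Theorem \ref{thm:main}(b) and then reduce the claim to a Taylor expansion. First I would identify $\kappa(j)$: by the defining equation \eqref{eq:kappa_def} together with the hypothesis \eqref{eq:threshold},
$$\kappa(j)=\theta^{\ast}-\eta_j,\qquad \eta_j:=\frac{\log j -y+\varepsilon(j)}{2j\theta^{\ast}\sigma^2}.$$
In particular $\eta_j\to 0$, so $\kappa(j)\to\theta^{\ast}\in(0,\infty)$ and, by continuity of $\lambda''$ on $\mathrm{Int}\,\mathcal{D}$, $\lambda''(\kappa(j))\to\sigma^2\in(0,\infty)$. Since $V$ is nonarithmetic, Theorem \ref{thm:main}(b) applies and yields
$$V^{\ast(j)}(t)\sim\frac{1}{(2\pi\sigma^2)^{1/2}\theta^{\ast}}\frac{(\hat V(\kappa(j)))^j\,\eee^{t\kappa(j)}}{j^{1/2}}.$$
Writing $(\hat V(\kappa(j)))^j=\eee^{j\lambda(\kappa(j))}$, the corollary will follow once one shows
$$j\lambda(\kappa(j))+t\kappa(j)-\tfrac{1}{2}\log j\ \longrightarrow\ -\tfrac{y}{2}.$$

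The heart of the argument is a Taylor expansion of $\lambda$ and $\lambda'$ at $\theta^{\ast}$. From \eqref{eq:kappa_def},
$$\frac{t}{j}=-\lambda'(\theta^{\ast}-\eta_j)=-\lambda'(\theta^{\ast})+\eta_j\sigma^2-\tfrac{\eta_j^2}{2}\lambda'''(\theta^{\ast})+O(\eta_j^3),$$
which gives $t=-j\lambda'(\theta^{\ast})+j\eta_j\sigma^2+O(j\eta_j^2)$. Similarly,
$$\lambda(\kappa(j))=\lambda(\theta^{\ast})-\eta_j\lambda'(\theta^{\ast})+\tfrac{\eta_j^2}{2}\sigma^2+O(\eta_j^3).$$
Multiplying by $j$ and adding $t\kappa(j)=t\theta^{\ast}-t\eta_j$, and then using these two expansions, I compute the sum. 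The zeroth-order term equals $j[\lambda(\theta^{\ast})-\theta^{\ast}\lambda'(\theta^{\ast})]$, which vanishes thanks to the distinguishing hypothesis $\lambda(\theta^{\ast})=\theta^{\ast}\lambda'(\theta^{\ast})$. The first-order (in $\eta_j$) contributions from $j\lambda(\kappa(j))$, $t\theta^{\ast}$ and $-t\eta_j$ combine to give precisely $j\eta_j\,\theta^{\ast}\sigma^2$, while the second-order terms collapse to $O(j\eta_j^2)=O((\log j)^2/j)=o(1)$.

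By the definition of $\eta_j$, $j\eta_j\,\theta^{\ast}\sigma^2=\tfrac{1}{2}(\log j-y+\varepsilon(j))$, so
$$j\lambda(\kappa(j))+t\kappa(j)=\tfrac{1}{2}\log j-\tfrac{y}{2}+o(1),$$
and combining this with the asymptotic formula from Theorem \ref{thm:main}(b) gives \eqref{cor:clt_claim}.

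The only real subtlety is bookkeeping in the Taylor expansion: one must track the three places where an $\eta_j$-linear term appears ($\lambda$, $-\lambda'$, and the multiplication by $\theta^{\ast}$) and confirm that the cancellation leaves exactly $j\eta_j\theta^{\ast}\sigma^2$. This is where the algebraic identity $\lambda(\theta^{\ast})=\theta^{\ast}\lambda'(\theta^{\ast})$ is doing all the work: it is precisely the condition that makes the rate of exponential decay of $(\hat V(\kappa(j)))^j\eee^{t\kappa(j)}$ trivial along the critical threshold, so that only the polynomial correction $j^{-1/2}$ and the logarithmic shift encoded in $\eta_j$ remain. Bounds on $\lambda'''$ near $\theta^{\ast}$ (which is an interior point of $\mathcal{D}$) justify the $O(\eta_j^3)$ remainders and hence the final $o(1)$ statement.
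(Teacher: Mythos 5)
Your proposal is correct and follows essentially the same route as the paper: identify $\kappa(j)=\theta^{\ast}-\eta_j$ from \eqref{eq:threshold}, invoke Theorem \ref{thm:main}(b), and Taylor-expand $j\lambda(\kappa(j))+t\kappa(j)=j(\lambda(\kappa(j))-\kappa(j)\lambda'(\kappa(j)))$ to second order at $\theta^{\ast}$, with the hypothesis $\lambda(\theta^{\ast})=\theta^{\ast}\lambda'(\theta^{\ast})$ killing the $O(j)$ term and the linear term yielding $j\eta_j\theta^{\ast}\sigma^2=\tfrac12(\log j-y+\varepsilon(j))$. The bookkeeping and the $O(j\eta_j^2)=O((\log j)^2/j)=o(1)$ remainder match the paper's computation exactly.
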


\section{Applications}

Recall our standing assumption that $V(x)=0$ for $x<0$.

\noindent
{\it Example~\ref{example1} (continuation).} Recall that in this example $V(x)=b x^{\alpha}$ for $x\geq 0$ and some $b,\alpha>0$, $\hat{V}(s)=b\Gamma(\alpha+1)s^{-\alpha}$, $-\lambda'(s)=\frac{\alpha}{s}$ and $\kappa(j)=\alpha j/t$. Further, $\lambda''(s)=\frac{\alpha}{s^2}$, which implies that
$$
\lambda''(\kappa(j))=\frac{t^2}{\alpha j^2},\quad a(j)=\frac{t}{\sqrt{\alpha j}},\quad \kappa(j)a(j)=\sqrt{\alpha j}.
$$
Condition \eqref{eq:for BerryEsseen} holds in view of 
$$
T_j~=~c_1\cdot t/j
$$
for a constant $c_1>0$. Condition~\eqref{eq:strongnonlattice} reads 
$$
\sup_{|z|>\gamma}|1-{\rm i}z|^{-\alpha}=(1+\gamma^2)^{-\alpha/2}<1\quad\text{for every fixed }\gamma>0
$$
and also holds true. Note that no assumptions have been imposed on $t=t_j$. This ensures that~\eqref{eq:thm_part_a_claim} holds true for any sequence $(t_j)_{j\in\mn}$.

If $t_j\sim c\cdot j$ for some $c>0$, the same asymptotic also follows from part (b) of Theorem~\ref{thm:main}. After simple manipulations we obtain 
$$
V^{\ast(j)}(t)~\sim~\frac{1}{(2\pi \alpha j)^{1/2}}\left(\frac{b\Gamma(\alpha+1) \eee^{\alpha }t^{\alpha}}{\alpha^{\alpha} j^{\alpha}}\right)^j,\quad j\to\infty.
$$

\vspace{2mm}

\noindent
{\it Example~\ref{example2} (continuation with $a=b=1$, Laguerre polynomials).} Recall that in this case $V(t)=t+1$ for $t\geq 0$. Put $f(t)=t$ and $g(t)=1$ for $t\geq 0$ and note that $f^{\ast(j)}(t)=t^j/(j!)$ and $g^{\ast(j)}(t)=1$ for all $j\in\mn$ and $t\geq 0$. This entails
$$V^{\ast(j)}(t)=(f+g)^{\ast(j)}(t)=\sum_{i=0}^{j}\binom{j}{i}(f^{\ast(i)}\ast g^{\ast(j-i)})(t)=\sum_{i=0}^{j}\binom{j}{i}\frac{t^i}{i!}=L_j(-t)\quad t\geq 0,$$
where $L_j$ is the standard Laguerre polynomial. Clearly,
\begin{multline*}
\hat{V}(s)=s^{-1}+1,\quad \lambda(s)=\log(s+1)-\log(s),\quad \lambda'(s)=\frac{1}{s+1}-\frac{1}{s},\quad s_{-}=0,\quad s_{+}=+\infty,\\
\lambda''(s)=\frac{1}{s^2}-\frac{1}{(1+s)^2},\quad \lambda'''(s)=\frac{2}{(1+s)^3}-\frac{2}{s^3},\quad \kappa(j)=\frac{1}{2}\left(\Big(1+\frac{4j}{t}\Big)^{1/2}-1\right).
\end{multline*}

We now distinguish three cases according to the limit behavior of the ratio $j/t$.

\noindent
{\sc Case I}: $j=o(t)$ and $j\to+\infty$. We are in the setting of Theorem~\ref{thm:main}(a), since in this case
$$
\kappa(j)=\frac{j}{t}-\frac{j^2}{t^2}+O\left(\frac{j^3}{t^3}\right),\quad j\to\infty,
$$
and, moreover,
\begin{equation}\label{ex:Laguerre1}
a(j)~\sim~t j^{-1/2},\quad \kappa(j)a(j)~\sim~\sqrt{j},\quad T_j~\sim~7t/j,\quad j\to\infty.
\end{equation}
The only condition which remains to be checked is~\eqref{eq:strongnonlattice}. Upon squaring it takes the form
$$
\limsup_{j\to\infty}\,\sup_{|z|\geq \gamma}\left(\frac{1+\frac{z^2}{(\kappa(j)+1)^2 T_j^2}}{1+\frac{z^2}{\kappa^2(j)T_j^2}}\right)<1\quad\text{for every }\gamma>0.
$$
For $0<b<a$, the function $z\mapsto (1+z^2/a)/(1+z^2/b)$ is decreasing on $(0,\infty)$. Thus,
$$
\limsup_{j\to\infty}\,\sup_{|z|\geq \gamma}\left(\frac{1+\frac{z^2}{(\kappa(j)+1)^2 T_j^2}}{1+\frac{z^2}{\kappa^2(j)T_j^2}}\right)=\limsup_{j\to\infty}\left(\frac{1+\frac{\gamma^2}{(\kappa(j)+1)^2 T_j^2}}{1+\frac{\gamma^2}{\kappa^2(j)T_j^2}}\right)=\frac{1}{1+\gamma^2/49}<1.
$$
In view of $t\kappa(j)=j/(1+\kappa(j))$, formula~\eqref{eq:thm_part_a_claim} reads
$$
L_j(-t)=V^{\ast(j)}(t)~\sim~\frac{1}{(2\pi j)^{1/2}}(G(\kappa(j)))^j,\quad j\to\infty,
$$
where
$$
G(z):=\exp\left\{\frac{1}{1+z}+\log \left(1+\frac{1}{z}\right)\right\}=\left(1+\frac{1}{z}\right)\exp\left(\frac{1}{1+z} %(1+z)^{-1}
\right),\quad z>0.
$$
\noindent
{\sc Case II}: $t\sim c \cdot j$ for some $c>0$.
In this case $\lim_{j\to\infty} \kappa(j)= \phi(c):=2^{-1}((1+4c^{-1})^{1/2}-1)>0$. Since also
\begin{multline*}
\lambda^{\prime\prime} (\kappa(j))~\to~\lambda^{\prime\prime}(\phi(c))
=\frac{1}{(\phi(c))^2}-\frac{1}{(1+\phi(c))^2}=\frac{1+2\phi(c)}{(\phi(c))^2(1+\phi(c))^2}\\
=\frac{1+2\phi(c)}{c^{-2}}=c^2(1+4c^{-1})^{1/2},\quad j\to\infty
\end{multline*}
and $V$ is nonarithmetic, part (b) of Theorem~\ref{thm:main} yields
$$
L_j(-t)=V^{\ast(j)}(t)~\sim~\frac{(c^2+4c)^{-1/4}}{\phi(c)}\frac{1}{\sqrt{2\pi t}}(G(\kappa(j)))^j,\quad j\to\infty.
$$

\noindent
{\sc Case III}: $t=o(j)$. In this case $\kappa(j)\sim (j/t)^{1/2}$ as $j\to\infty$. In particular, $\kappa(j)$ diverges to infinity. Also, 
\begin{multline*}
\lambda''(\kappa(j))~\sim~2(t/j)^{3/2},\quad a(j)~\sim~(2t^{3/2}j^{-1/2})^{1/2},\\
 \kappa(j)a(j)~\sim~2^{1/2}(tj)^{1/4},\quad  T_j~\sim 3(t/j)^{1/2},\quad j\to\infty,
\end{multline*}
and~\eqref{eq:for BerryEsseen} holds true. Unfortunately,~\eqref{eq:strongnonlattice} does not hold in this regime which means that Theorem~\ref{thm:main} is not applicable. Nevertheless, a formal substitution of the above ingredients into~\eqref{eq:thm_part_a_claim} yields
\begin{equation}\label{eq:perron1}
L_j(-t)=V^{\ast(j)}(t)~\sim~\frac{1}{2(\pi^2 tj)^{1/4}}(G(\kappa(j))^j,\quad j\to\infty,
\end{equation}
which under the additional assumption $t=o(j^{1/3})$ as $j\to\infty$ simplifies to
\begin{equation}\label{eq:perron2}
L_j(-t)=V^{\ast(j)}(t)~\sim~%\frac{1}{2\sqrt{\pi}}
\frac{1}{2(\pi^2 tj)^{1/4}}\exp\big(-t/2+2(tj)^{1/2}\big), %e^{-t/2}e^{2\sqrt{tj}},
\quad t\to\infty.
\end{equation}
Formula~\eqref{eq:perron2} is indeed correct. Actually, it is a particular case of Perron's formula for the Laguerre polynomials, see formula (8.22.3) on p.~199 in~\cite{Szogo:1975}.

\vspace{2mm}
\noindent
{\it Example~\ref{example2} (general functions of linear growth).} Consider a more general situation $V(x)=a\cdot x+\varepsilon(x)$, where $a>0$ and $x\mapsto \varepsilon(x)$ is a function of finite
total variation on $[0,\infty)$. Suppose that\footnote{By definition, $\int_{[0,\infty)}y^p |{\rm d}\varepsilon(y)|=\int_{[0,\infty)}y^p {\rm d}\Xi_{[0,y]}(\varepsilon)$, where $\Xi_{[0,y]}(\varepsilon)$ is the total variation of $\varepsilon$ on $[0,y]$.}, for some $p\in \{3,4,5,\ldots\}$,
$\int_{[0,\infty)}y^p|{\rm d}\varepsilon(y)|<\infty$ and put
$$
\beta_k:=\int_{[0,\infty)}y^k {\rm d}\varepsilon(y),\quad k=0,1,\ldots,p-1.
$$
Therefore, as $s\to 0+$, by Taylor's expansion,
\begin{align}
\hat{V}(s)&=a/s+\sum_{k=0}^{p-1}\frac{\beta_k (-s)^k}{k!}+O(s^p),\quad -\hat{V}^{\prime}(s)=a/s^2+\sum_{k=0}^{p-2}\frac{\beta_{k+1} (-s)^k}{k!}+O(s^{p-1}),\notag\\
\hat{V}^{\prime\prime}(s)&=2a/s^3+\sum_{k=0}^{p-3}\frac{\beta_{k+2} (-s)^k}{k!}+O(s^{p-2}),\quad
-\hat{V}^{\prime\prime\prime}(s)=6a/s^4+\sum_{k=0}^{p-4}\frac{\beta_{k+3}(-s)^k}{k!}+O(s^{p-3})\label{ex:linear_V_derivatives}
\end{align}
and
$$
\lambda(s)=-\log s+\log a +\sum_{k=1}^{p}\delta_k s^k+O(s^{p+1}),\quad s\to 0+,
$$
where $\delta_k$'s are the values of certain multivariate polynomials at points $\beta_{\ell}/a$, $\ell=0,\ldots,p-1$, which can be calculated explicitly. For instance, $\delta_1=\beta_0/a$, $\delta_2=-\beta_1/a-\beta_0^2/(2a^2)$, $\delta_3=\beta_2/(2a)+\beta_0\beta_1/a^2+\beta_0^3/(3a^3)$. Since the derivatives of $\lambda$ are the known rational functions of the derivatives of $\hat{V}$, we conclude from~\eqref{ex:linear_V_derivatives} that
$$
\lambda'(s)=-1/s+ \sum_{k=1}^{p}k\delta_ks^{k-1}+O(s^p),\quad \lambda''(s)\sim~1/s^2,\quad \lambda'''(s)~\sim~-2/s^3,\quad s\to 0+.
$$
Denote by $\ell$ a solution to $-\lambda^\prime(\ell(s))=1/s$ for $s>0$. In particular, $\kappa(j)=\ell(j/t)$. By asymptotic inversion, we infer
\begin{equation}\label{eq:ell_linear}
\ell(s)=s+\sum_{k=2}^{p+1}\iota_{k-1}s^k+O(s^{p+2}),\quad s\to 0+
\end{equation}
for some real coefficients $\iota_1,\ldots,\iota_p$. For instance, $\iota_1=-\delta_1$, $\iota_2=\delta_1^2-2\delta_2$, $\iota_3=-\delta_1^3+6\delta_1\delta_2-3\delta_3$. Assume that $j\to+\infty$ and $t/j\to +\infty$. A specialization of~\eqref{eq:ell_linear} yields
\begin{equation}\label{eq:kappa_linear}
\kappa(j)=j/t+\sum_{k=2}^{p+1}\iota_{k-1} (j/t)^k+O((j/t)^{p+2}).
\end{equation}
By virtue of
$$
a(j)~\sim~tj^{-1/2},\quad \kappa(j)a(j)~\sim~ j^{1/2},\quad T_j~\sim~ 7 t/j,\quad j\to\infty,
$$
relation~\eqref{eq:for BerryEsseen} holds true. Next, we check that~\eqref{eq:strongnonlattice} holds without any extra assumptions. Using that $\int_{[0,\infty)}|{\rm d}\varepsilon(y)|<\infty$ and writing
$$
\hat{V}(\kappa(j)-{\rm i} z/T_j)=\frac{a}{\kappa(j)-{\rm i}z/T_j}+\int_{[0,\infty)}\eee^{-x(\kappa(j)-{\rm i}z/T_j)}{\rm d}\varepsilon(x),
$$
we infer
$$
\sup_{|z|\geq \gamma}|\hat{V}(\kappa(j)-{\rm i} z/T_j)|\leq \sup_{|z|\geq \gamma}\frac{a}{|\kappa(j)-{\rm i}z/T_j|}+ \int_{[0,\infty)}\eee^{-x\kappa(j)}|{\rm d}\varepsilon(x)|=\frac{a}{((\kappa(j))^2+z^2/T_j^2)^{1/2}}+O(1).
$$
Thus,~\eqref{eq:strongnonlattice} follows upon noticing that
\begin{multline*}
\limsup_{j\to\infty}\sup_{|z|\geq \gamma}\frac{|\hat{V}(\kappa(j)-{\rm i} z/T_j)|}{\hat{V}(\kappa(j))}=\limsup_{j\to\infty}\frac{j}{at}\sup_{|z|\geq \gamma}|\hat{V}(\kappa(j)-{\rm i} z/T_j)|\\ \leq\lim_{j\to\infty}\frac{j}{t((\kappa(j))^2+\gamma^2/T_j^2)^{1/2}}=\frac{1}{(1+\gamma^2/49)^{1/2}}<1.
\end{multline*}
Thus, part (a) of Theorem~\ref{thm:main} is applicable. Using \eqref{eq:ell_linear} in combination with $(\lambda(\ell(s))+\ell(s)/s)^\prime=-\ell(s)/s^2$ we conclude that 
$$
\lambda(\ell(s))+\ell(s)/s=-\log s+C-\sum_{k=1}^p (\iota_k/k) s^k+O(s^{p+1}),\quad s\to 0+,$$ 
for some $C\in\mathbb{R}$. Using $\lim_{s\to 0+}(\lambda(s)+\log s)=\log a$, we conclude that $C=1+\log a$, whence
$$(\hat{V}(\kappa(j)))^j\eee^{t\kappa(j)}=\eee^{j(\lambda(\ell(j/t))+\ell(j/t)/(j/t))}=\frac{t^j}{j^j}a^j\eee^j \exp\left(-\sum_{k=1}^{p}(\iota_k/k) j^{k+1}/t^k+O(j^{p+2}/t^{p+1})\right).$$
Using Stirling's approximation we obtain from~\eqref{eq:thm_part_a_claim}
\begin{equation}\label{eq:prep}
V^{\ast(j)}(t)~\sim~\frac{t^j a^j}{j!}\exp\left(-\sum_{k=1}^p (\iota_k/k) j^{k+1}/t^k+O(j^{p+2}/t^{p+1})\right),\quad j\to\infty,
\end{equation}
which is meaningful provided that $j=o(t^{(p+1)/(p+2)})$. A particular case of this formula for $p=1$ was derived by a completely different technique in~\cite{Bohun_eta_al}.

Let $\xi_1$, $\xi_2,\ldots$ be independent copies of an almost surely (a.s.) positive random variable $\xi$. The function $U$ defined by $U(t):=1+\sum_{n\geq 1}\mmp\{\xi_1+\ldots+\xi_n\leq t\}$ for $t\geq 0$ is called renewal function. The original motivation behind the present work has been our desire to understand the asymptotic of $U^{\ast(j)}(t)$ for various asymptotic regimes of divergent $j$ and $t$. Now we explain that \eqref{eq:prep} does the job under some additional assumptions imposed on the distribution of $\xi$. Assume that $\me [\xi^{p+2}]<\infty$ for some $p\in\mn$ and that some convolution power of the distribution of $\xi$ has an absolutely continuous component. Put $\mathfrak{m}:=\me[\xi]$. By Remark 3.1.7 (ii) in \cite{Frenk:1987}, the function $x\mapsto U(x)-x/\mathfrak{m}$ has a finite total variation and, furthermore, $\int_{[0,\infty)} y^p |{\rm d}(U(y)-y/\mathfrak{m})|<\infty$. Since $$\int_{[0,\infty)}\eee^{-sy}{\rm d}(U(y)- y/\mathfrak{m})=\frac{1}{1-\me [\eee^{-s\xi}]}-\frac{1}{\mathfrak{m}s},\quad s>0,$$ an application of Lebesgue's dominated convergence theorem yields 
$$
\beta_k=\int_{[0,\infty)}y^k {\rm d}(U(y)-y/\mathfrak{m})=(-1)^k\lim_{s\to 0+}\Big(\frac{1}{1-\me [\eee^{-s\xi}]}-\frac{1}{\mathfrak{m}s}\Big)^{(k)},\quad k=0,1,\ldots, p-1.
$$ 
In particular,
\begin{align*}
\beta_0&=\me [\xi^2]/(2\mathfrak{m}^2),\quad \beta_1=\me[\xi^3]/(6\mathfrak{m}^2)-(\me[\xi^2])^2/(4\mathfrak{m}^3),\\
\beta_2&=\me [\xi^4]/(12 \mathfrak{m}^2)-\me [\xi^3]\me [\xi^2]/(3\mathfrak{m}^3)+(\me [\xi^2])^3/(4\mathfrak{m}^4).
\end{align*}
Thus, assuming, for instance, that $p=3$ we obtain from~\eqref{eq:prep}
$$U^{\ast(j)}(t)~\sim~\frac{t^j}{\mathfrak{m}^j j!}\exp\big(b_1 j^2/t+ b_2 j^3/t^2+b_3 j^4/t^3\big),\quad j\to\infty$$ whenever $j=o(t^{4/5})$. Here, $b_1=\me [\xi^2]/(2\mathfrak{m})$, $b_2=-\me [\xi^3]/(6\mathfrak{m}) %2\me [\xi^3]/(3\mathfrak{m})-5(\me[\xi^2])^2/(8\mathfrak{m}^2)
$, $$b_3=\frac{1}{24}\Big(\frac{\me [\xi^4]}{\mathfrak{m}}+\frac{2\me [\xi^2]\me[\xi^3]}{\mathfrak{m}^2}-\frac{(\me [\xi^2])^3}{\mathfrak{m}^3}\Big).$$ The expression for $b_3$ makes it clear that, unlike $b_1$ and $b_2$, $b_j$ with $j\geq 3$ is not a function of $\me [\xi^{j+1}]$ alone.

\vspace{2mm}
\noindent
{\it Example~\ref{example5} (continuation).} Recall that $V(t)=\eee^{at}-1$ and there is an explicit formula~\eqref{eq:v_j_shifted_expponents_explicit} for $V^{\ast(j)}(t)$. Further
$$
\lambda''(\kappa(j))=t^2/j^2,\quad a(j)=tj^{-1/2},\quad \kappa(j)a(j)=atj^{-1/2}+j^{1/2},\quad T_j=7t/j.
$$
Note that $\kappa(j)a(j)$ always diverges as $j\to\infty$ but in order to ensure subexponential divergence we need to assume that
\begin{equation}\label{eq:example5_subexp}
t=o(\eee^{\gamma j}),\quad j\to\infty\quad\text{ for every }\gamma>0.
\end{equation}
Condition~\eqref{eq:for BerryEsseen} holds true whereas condition~\eqref{eq:strongnonlattice} follows from the formula
$$
\frac{\hat{V}(\kappa(j)-{\rm i}z/T_j)}{\hat{V}(\kappa(j))}=\frac{1}{1-{\rm i}z/7}.
$$
Thus, Theorem~\ref{thm:main}(a) yields
\begin{multline*}
V^{\ast(j)}(t)= %V^{\ast(j)}(t)=
\frac{1}{(j-1)!}\int_0^{at} y^{j-1} \eee^{y}{\rm d}y~\sim~\frac{1}{\sqrt{2\pi}}\left(\frac{a\eee t}{j}\right)^j\eee^{at}\left(\frac{at}{\sqrt{j}}+\sqrt{j}\right)^{-1}\\~\sim~\frac{(at)^{j-1}}{(j-1)!}\eee^{at}\frac{at}{at+j},\quad j\to\infty,
\end{multline*}
where the last equivalence is a consequence of Stirling's approximation. This asymptotic formula holds under the sole assumption~\eqref{eq:example5_subexp}.

Our results have a neat interpretation in the theory of branching processes. Let $\mathcal{P}$ be a random point process on $[0,\infty)$, that is, a random element defined on some underlying probability space, which takes values in the space of locally finite point measures on $[0,\infty)$. Now we define a branching random walk generated by $\mathcal{P}$. At time $0$ there is one individual, the ancestor located at the origin. The ancestor generates offspring, constituting the first generation, whose birth times are given by the point process $\mathcal{P}$. The first generation gives rise to the second generation, where the differences in birth times between individuals and their respective mothers are distributed in accordance with copies of $\mathcal{P}$, which are independent for distinct mothers. The second generation produces the third one, and so on. Each individual in these successive generations operates independently of others.

By definition, the intensity measure of $\mathcal{P}$ is a deterministic measure given by
$$
\mu(A):=\mathbb{E}[\mathcal{P}(A)]\quad\text{ for Borel }A\subseteq [0,\infty).
$$
Thus, $\mu(A)$ is the average number of individuals of the first generation with birth times belonging to $A$. Denote by $N_j(t)$ the number of individuals of the generation $j$ with birth times in $[0,t]$. Using induction in $j$, it can be checked that $\me [N_j(t)]=\mu^{\ast(j)}([0,t])$. Our results provide a robust tool for the asymptotic analysis of these averages in generations $j$, with $j$ and $t$ tending to infinity.

For the time being, put $V(x)=\mu([0,x])$ for $x\geq 0$. Assume that there exists $\theta^\ast>0$ such that $\lambda(\theta^\ast)=\theta^\ast \lambda^\prime(\theta^\ast)$. A specialization of Proposition 3.2 in \cite{Iksanov+Mallein:2022} yields for $\theta\in {\rm Int}\,\mathcal{D}$, $\theta<\theta^\ast$ $$\lim_{j\to\infty}\sup_{y\in\mr}\Big|j^{1/2}\eee^{-\theta(-\lambda^\prime(\theta)j+y)}(\hat V(\theta))^{-j}N_j(-\lambda^\prime(\theta)j+y)-\frac{W(\theta)}{\theta}\frac{1}{(2\pi \lambda^{\prime\prime}(\theta))^{1/2}}\eee^{-\frac{y^2}{2\lambda^{\prime\prime}(\theta)j}}\Big|=0\quad \text{a.s.},$$ where $W(\theta)$ is a random variable with $\me [W(\theta)]=1$ (the a.s. limit of the uniformly integrable Biggins martingale). In particular, if $y=O(j^{1/2})$ we infer $$N_j(-\lambda^\prime(\theta)j+y)~\sim~\frac{W(\theta)}{\theta}\eee^{\theta(-\lambda^\prime(\theta)j+y)}(\hat V(\theta))^j p(0,\lambda^{\prime\prime}(\theta)j; y),\quad j\to\infty\quad\text{a.s.}$$ Formula \eqref{eq:cor_j_linear_growth} demonstrates that this a.s. convergence is accompanied by the convergence in mean. Finally, we note that we are not aware of any results on the a.s. convergence of $N_j(t)$ with diverging  $j=o(t)$.

\section{Proofs}

Denote by $X_{j,\,t}$ a random variable with the distribution
\begin{equation}\label{eq:change meas}
\mmp\{X_{j,\,t}\in{\rm d}s\}=\frac{\eee^{-\kappa(j)s}V^{\ast(j)}({\rm d}s)}{(\hat V(\kappa(j)))^j},\quad s\geq 0,\quad j\in\mn
\end{equation}
and note that
\begin{equation}\label{eq:change meas_laplace}
\me [\eee^{-u X_{j,\,t}}]=\frac{(\hat{V}(\kappa(j)+u))^j}{(\hat{V}(\kappa(j)))^j}=\eee^{j(\lambda(\kappa(j)+u)-\lambda(\kappa(j)))},\quad u\geq 0,\quad j\in\mn.
\end{equation}
The original choice of $\kappa(j)$ was motivated by this change of measure and more importantly by the fact that $\me [X_{j,t}]=t$ which can be readily seen by differentiating~\eqref{eq:change meas_laplace}. Also, observe that ${\rm Var}\,[X_{j,t}]=(a(j))^2$.  Put $\bar X_{j,t}=t-X_{j,t}$ for $j\in\mn$.

Integrating over $s\in [0,t]$ in~\eqref{eq:change meas} we obtain
$$
V^{\ast (j)}(t)=(\hat V(\kappa(j)))^j\eee^{t\kappa(j)}\me [\eee^{-\kappa(j)\bar X_{j,t}}\1_{\{\bar X_{j,t}\geq 0\}}],\quad j\in\mn.
$$
Thus, Theorem~\ref{thm:main} is equivalent to the following proposition.
\begin{proposition}\label{prop:expon}
(i) Under the assumptions of part (a) of Theorem~\ref{thm:main}
$$
\lim_{j\to\infty}\kappa(j)a(j)\me [\eee^{-\kappa(j)\bar X_{j,t}}\1_{\{\bar X_{j,t}\geq 0\}}]=p(0,1;0)=(2\pi)^{-1/2}.
$$

\noindent (ii) Under the assumptions of part (b) of Theorem \ref{thm:main}
$$
\lim_{j\to\infty} \theta j^{1/2} \me [\eee^{-\kappa(j)\bar X_{j,t}}\1_{\{\bar X_{j,t}\geq 0\}}]=p(0,\sigma^2;0)=(2\pi\sigma^2)^{-1/2}.
$$

\end{proposition}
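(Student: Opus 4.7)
The identity \eqref{eq:change meas_laplace} reveals that $X_{j,t}$ is distributed as the sum of $j$ i.i.d.\ copies of the exponentially tilted law $\tilde{\mu}_{\kappa(j)}({\rm d}s):=\eee^{-\kappa(j)s}V({\rm d}s)/\hat V(\kappa(j))$, which has mean $t/j$ and variance $\lambda''(\kappa(j))$. Hence $\bar X_{j,t}$ is a centered i.i.d.\ sum of variance $a(j)^2$. Using $\eee^{-\kappa(j)y}\1_{\{y\geq 0\}}=\kappa(j)\int_y^\infty \eee^{-\kappa(j)s}\,{\rm d}s$ and Fubini, one obtains the tail representation
\[
\me[\eee^{-\kappa(j)\bar X_{j,t}}\1_{\{\bar X_{j,t}\geq 0\}}]=\kappa(j)\int_0^\infty \eee^{-\kappa(j)s}\mmp\{\bar X_{j,t}\in[0,s]\}\,{\rm d}s=\int_0^\infty \eee^{-\tau}\mmp\{\bar X_{j,t}\in[0,\tau/\kappa(j)]\}\,{\rm d}\tau,
\]
so both parts of the proposition reduce to proving that, for every fixed $\tau>0$, the quantity $\Psi_j(\tau):=\kappa(j)a(j)\mmp\{\bar X_{j,t}\in[0,\tau/\kappa(j)]\}$ converges to $\tau/\sqrt{2\pi}$, together with a dominant that is integrable against $\eee^{-\tau}\,{\rm d}\tau$. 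Informally the limit expresses that the distribution of $\bar X_{j,t}$ has ``density at $0$'' asymptotic to $(2\pi a(j)^2)^{-1/2}$ on an interval of length $\tau/\kappa(j)$ negligible compared to $a(j)$. (In part (ii) one also uses $\theta j^{1/2}/(\kappa(j)a(j))\to 1/\sigma$ to convert the resulting factor $1/\sqrt{2\pi}$ into $(2\pi\sigma^2)^{-1/2}$.)

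For part (ii) the tilted laws $\tilde{\mu}_{\kappa(j)}$ converge weakly to the nonarithmetic law $\tilde{\mu}_\theta$ (exponential tilting preserves nonarithmeticity), and a triangular-array version of Stone's local central limit theorem -- whose characteristic-function hypothesis away from the origin is inherited uniformly from this weak convergence -- gives
\[
j^{1/2}\mmp\{\bar X_{j,t}\in[c,d]\}\longrightarrow\frac{d-c}{\sigma\sqrt{2\pi}},\qquad j\to\infty,
\]
uniformly in $[c,d]\subseteq[-M,M]$ for each $M>0$. Applied with $[c,d]=[0,\tau/\kappa(j)]$ this yields the pointwise limit $\Psi_j(\tau)\to\tau/\sqrt{2\pi}$, while a standard Berry--Esseen bound -- uniform in $j$ since $\kappa(j)$ and $\lambda''(\kappa(j))$ are bounded away from $0$ and $\infty$ -- supplies an integrable dominant of the form $\Psi_j(\tau)\leq C\tau+C$.

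Part (i) is analogous in spirit but requires a genuine Fourier-analytic uniform local CLT, since $\kappa(j)$ is no longer required to converge. Introducing the characteristic function
\[
\phi_j(v):=\me[\eee^{iv\bar X_{j,t}}]=\eee^{ivt}\bigl(\hat V(\kappa(j)+iv)/\hat V(\kappa(j))\bigr)^j
\]
and recovering $\Psi_j(\tau)$ by Fourier inversion, one splits the $v$-integral into three zones. In the Gaussian zone $|v|\leq \delta/a(j)$ a Taylor expansion of $\log\hat V$ around $\kappa(j)$ together with \eqref{eq:for BerryEsseen} (which controls the cubic remainder) yields $\phi_j(v)\approx \eee^{-v^2a(j)^2/2}$ with negligible error. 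In the Cram\'er tail zone $|v|\geq \gamma/T_j$, the bound \eqref{eq:strongnonlattice} forces $|\phi_j(v)|\leq c^j$, and the residual tail contribution is absorbed by the hypothesis $\eee^{-\gamma j}\kappa(j)a(j)\to 0$. In the intermediate zone $\delta/a(j)\leq|v|\leq\gamma/T_j$ an Esseen-type smoothing argument, again powered by \eqref{eq:for BerryEsseen}, interpolates between the two regimes. Assembling the three estimates produces a uniform local density statement for $\bar X_{j,t}$ near the origin, which in turn implies both the pointwise convergence $\Psi_j(\tau)\to\tau/\sqrt{2\pi}$ and the required integrable dominant.

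The chief technical hurdle is the intermediate zone in part (i): neither the Taylor expansion of the Gaussian zone nor the Cram\'er-type bound of the tail zone is sharp there, and the smoothing patch has to be carried out while keeping every estimate uniform in the simultaneously varying parameters $\kappa(j)$, $a(j)$, and $T_j$. A secondary subtlety is to justify the triangular-array form of Stone's local CLT used in part (ii), which requires ruling out the emergence of a spurious lattice structure at the scale $\tau/\theta$; this is handled by combining the nonarithmeticity of $V$ with the convergence $\kappa(j)\to\theta$ to obtain a uniform-in-$j$ lower bound on $|1-\widehat{\tilde\mu_{\kappa(j)}}(u)|$ on compact subsets of $\mr\setminus\{0\}$.
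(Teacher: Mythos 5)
Your overall strategy coincides with the paper's: both reduce the proposition to a local central limit theorem for the triangular array $\bar X_{j,t}$ (this is exactly Proposition~\ref{prop:local}) and then integrate the exponential weight $\eee^{-\kappa(j)y}$ against the resulting local density $\approx p(0,1;0)/a(j)$ on the scale $1/\kappa(j)=o(a(j))$. Your Fubini/tail identity followed by dominated convergence is a clean equivalent of the paper's decomposition $K(j)+L(j)$, in which $[0,\theta a(j))$ is cut into blocks of length $\theta/\kappa(j)$ and a Riemann-type sum is estimated; your dominant $\Psi_j(\tau)\leq C(\tau+1)$ does follow from the uniform local CLT by subdividing $[0,\tau/\kappa(j)]$ into $\lceil\tau\rceil$ blocks of length $1/\kappa(j)$, together with the trivial bound $\Psi_j(\tau)\leq\kappa(j)a(j)\leq\tau$ when $\tau>\kappa(j)a(j)$. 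Part (ii) is treated in the paper exactly as you propose (Stone's theorem for the tilted laws, nonarithmeticity of $V$ giving $|\hat V(\theta-{\rm i}u)/\hat V(\theta)|<1$ for $u\neq0$).

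The gap sits in the local CLT for part (i), which you explicitly leave as an unresolved ``chief technical hurdle''. Two remarks. First, the intermediate zone $\delta/a(j)\leq|v|\leq\gamma/T_j$ is in fact vacuous: since $\bar X_{j,t}$ is a sum of $j$ i.i.d.\ copies of $t/j-X'_{j,t}$, the standard Berry--Esseen characteristic-function inequality (Lemma~1 on p.~109 of~\cite{Petrov}) gives $\big|\me[\eee^{{\rm i}z\bar X_{j,t}/a(j)}]-\eee^{-z^2/2}\big|\leq 16|z|^3\eee^{-z^2/3}L_j$ on the whole range $|z|\leq(4L_j)^{-1}$, and \eqref{eq:for BerryEsseen} yields $L_j\leq 4T_j/a(j)=o(1)$, so this single estimate already reaches frequencies $|v|\asymp 1/T_j$, where \eqref{eq:strongnonlattice} takes over; no interpolation argument is needed. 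Second, ``recovering $\Psi_j(\tau)$ by Fourier inversion'' is not directly legitimate, because $\bar X_{j,t}$ need not possess a density and its characteristic function need not be integrable. The paper therefore convolves with an independent uniform variable and with a kernel $g_b$ whose characteristic function is supported on $[-b,b]$ with $b=\eee^{cj}$, inverts, and then de-smooths; the hypothesis $\eee^{-\gamma j}\kappa(j)a(j)\to0$ (equivalently, subexponential decay of $\delta(j)/a(j)$) is consumed precisely in this de-smoothing step (the choice of $\rho(j)$), not merely in absorbing the tail of the frequency integral as your sketch suggests. With these two repairs your outline becomes essentially the paper's proof.
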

To prove the proposition, we need a central local limit theorem for a special triangular array of random variables.
\begin{proposition}\label{prop:local}
(i) Under the assumptions of part (a) of Theorem \ref{thm:main},
$$\lim_{j\to\infty}\sup_{x\in\mr}\Big|\frac{a(j)}{\delta(j)}\mmp\{\bar X_{j,\,t}\in [xa(j), xa(j)+\delta(j))\}-p(0,1;x)\Big|=0$$ for any sequence $(\delta(j))_{j\in\mn}$ of positive numbers satisfying
$$
\lim_{j\to\infty}\frac{\delta(j)}{a(j)}=0\quad\text{and}\quad\lim_{j\to\infty}\frac{\eee^{\gamma j}\delta(j)}{a(j)}=\infty
$$
for every fixed $\gamma>0$.

\noindent (ii) Under the assumptions of part (b) of Theorem \ref{thm:main}, locally uniformly in $\delta>0$,
\begin{equation}\label{eq:Stone}
\lim_{j\to\infty}\sup_{x\in\mr}\Big|\mmp\{\bar X_{j,\,t}\in [xa(j), (x+\delta)a(j))\}-\delta p(0,1;x)\Big|=0.
\end{equation}
\end{proposition}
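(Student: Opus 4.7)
The plan is a Fourier-analytic local CLT for the triangular array $\{\bar X_{j,t}/a(j)\}$. The tilt \eqref{eq:change meas} realizes $X_{j,t}$ as a sum of $j$ i.i.d.\ copies of $\eta^{(j)}$ with law $\eee^{-\kappa(j)s}V({\rm d}s)/\hat V(\kappa(j))$; differentiating \eqref{eq:change meas_laplace} gives $\me[\eta^{(j)}]=-\lambda'(\kappa(j))=t/j$ and $\Var\eta^{(j)}=\lambda''(\kappa(j))$, so $\bar X_{j,t}/a(j)$ is a centered sum of unit variance with characteristic function
\begin{equation*}
\psi_j(u)=\exp\{j[\lambda(\kappa(j)+iu/a(j))-\lambda(\kappa(j))-(iu/a(j))\lambda'(\kappa(j))]\}.
\end{equation*}
Taylor expansion around $\kappa(j)$ together with $j\lambda''(\kappa(j))=a(j)^2$ yields $\log\psi_j(u)=-u^2/2+\sum_{k\ge 3}(iu)^k j\lambda^{(k)}(\kappa(j))/(k!\,a(j)^k)$. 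Since $(-1)^k\lambda^{(k)}(\kappa(j))$ is the $k$-th cumulant of $\eta^{(j)}$, standard cumulant--moment bounds combined with \eqref{eq:for BerryEsseen} force the cubic term to be $o(u^2)$ on $|u|=o(a(j)/T_j)$, hence $\psi_j(u)\to \eee^{-u^2/2}$ uniformly on bounded sets; moreover, for some small $\rho>0$ and some $c>0$, one has $|\psi_j(u)|\le \eee^{-cu^2}$ on $|u|\le \rho a(j)/T_j$ for all large $j$.

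For part (i), I would apply L\'evy's inversion to the law of $\bar X_{j,t}/a(j)$ first smoothed by convolution with a triangular density $\Delta_{\delta(j)/a(j)}$ (so that $\hat\Delta_{\delta(j)/a(j)}(u)\ge 0$ exhibits $O(u^{-2})$ decay, making the Fourier integral absolutely convergent), and then transfer back to the unsmoothed indicator $\mathbf{1}_{[x,x+\delta(j)/a(j))}$ via a standard Stone-type sandwich argument. The task reduces to showing, uniformly in $x\in\mr$,
\begin{equation*}
\int_\mr \eee^{-ixu}\bigl[\psi_j(u)\hat\Delta_{\delta(j)/a(j)}(u)-\eee^{-u^2/2}\bigr]\,{\rm d}u\to 0.
\end{equation*}
I split the $u$-line into three regions: \textbf{(A)} $|u|\le A$ with $A$ fixed, where $\psi_j(u)\to \eee^{-u^2/2}$ uniformly and dominated convergence delivers the Gaussian contribution; \textbf{(B)} $A<|u|\le \rho a(j)/T_j$, where $|\psi_j(u)|\le \eee^{-cu^2}$ makes the integral $O(\int_{|u|>A}\eee^{-cu^2}\,{\rm d}u)$, arbitrarily small as $A\to\infty$; \textbf{(C)} $|u|>\rho a(j)/T_j$, where the substitution $z=uT_j/a(j)$ places $|z|\ge\rho$, so \eqref{eq:strongnonlattice} gives $|\psi_j(u)|\le(1-\delta_0)^j$ for some $\delta_0>0$ eventually, and combining with the $O(u^{-2})$ decay of $\hat\Delta_{\delta(j)/a(j)}$ together with the subexponential-growth hypotheses on $\kappa(j)a(j)$ and on $a(j)/\delta(j)$ (the latter implied by the assumption on $\delta(j)$) renders this tail contribution $o(1)$.

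Part (ii) is the same scheme in a much simpler setting: $\kappa(j)\to\theta\in(0,\infty)$ and $\lambda''(\kappa(j))\to\sigma^2$ force weak convergence of $\eta^{(j)}$ to the $\theta$-tilt of $\mu$, which inherits nonarithmeticity from $V$; hypotheses \eqref{eq:for BerryEsseen} and \eqref{eq:strongnonlattice} then hold automatically (as noted in the remark after Theorem~\ref{thm:main}), and the conclusion becomes classical Stone's local limit theorem for i.i.d.\ sums, with uniform-in-$j$ estimates on characteristic functions following from the weak convergence to a nonarithmetic limit. The main obstacle I anticipate is region \textbf{(C)} of part (i): making the Taylor-expansion scale $|u|\sim a(j)/T_j$ interlock with the spectral-gap scale $\sim 1/T_j$ of \eqref{eq:strongnonlattice}, while keeping the Fourier integral controlled uniformly in $x$ under only the subexponential restrictions on $\kappa(j)a(j)$ and $a(j)/\delta(j)$. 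This delicate balance is precisely what the pair \eqref{eq:for BerryEsseen}--\eqref{eq:strongnonlattice} is tailored to achieve.
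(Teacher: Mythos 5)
Your proposal follows essentially the same route as the paper's proof: smooth the law of $\bar X_{j,t}/a(j)$, invert the Fourier transform, split the frequency line into a central zone controlled by a Berry--Esseen/third-cumulant bound derived from \eqref{eq:for BerryEsseen}, an intermediate zone with Gaussian-type decay, and a tail zone killed by \eqref{eq:strongnonlattice} at the scale $1/T_j$ against the subexponential growth of $a(j)/\delta(j)$ and $\kappa(j)a(j)$, then de-smooth by a Stone-type sandwich; part (ii) is likewise reduced to Stone's theorem. One small correction: under the hypotheses of part (b) only \eqref{eq:for BerryEsseen} holds automatically, not \eqref{eq:strongnonlattice} --- there the tail zone is compact after the Fej\'er truncation and is handled by nonarithmeticity of $V$, i.e.\ $|\hat V(\theta-{\rm i}u)/\hat V(\theta)|<1$ for $u\neq 0$, exactly as your closing sentence on weak convergence to a nonarithmetic limit suggests.
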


\subsection{Proof of Proposition~\ref{prop:local}}

If the assumptions of part (i) prevail, we argue along the lines of the proof of a local limit theorem for standard random walks (Corollary 1 in~\cite{Stone:1965}).

For $j\in\mn$ and $\delta>0$, put
$$
v_{j,\,\delta}:=\delta^{-1}\mmp\{\bar X_{j,\,t}\in [x, x+\delta)\},\quad x\in\mr.
$$
The so defined function is a version of the density of a random variable $\bar X_{j,\,t}-U_\delta$, where the variables $\bar X_{j,\,t}$ and $U_\delta$ are independent, and $U_\delta$ has a uniform distribution on $(0,\delta)$. In particular,
$$
\psi_\delta(z):=\me [\eee^{-{\rm i}zU_\delta}]=\frac{1-\eee^{-{\rm i}\delta z}}{{\rm i}\delta z},\quad z\in\mr.
$$
Thus,
\begin{equation}\label{eq:12}
\int_\mr \eee^{{\rm i}zx}v_{j,\,\delta}(xa(j)){\rm d}x=\frac{1}{a(j)}\int_\mr \eee^{{\rm i}zx/a(j)}v_{j,\,\delta}(x){\rm d}x=\frac{1}{a(j)}\me \eee^{{\rm i}z\bar X_{j,\,t}/a(j)}\psi_\delta(z/a(j)).
\end{equation}

The function $\varphi$ defined by $\varphi(z):=(1-|z|)\1_{(-1,1)}(z)$ for $z\in\mr$ is the characteristic function of an absolutely continuous probability distribution with density $g$ given by $$g(x):=\frac{1-\cos x}{\pi x^2},\quad x\in\mr,$$
see Chapter XV.2 in~\cite{Feller_Vol2}. For each $b>0$, put $g_b(x):=bg(bx)$, $x\in\mr$ and $\varphi_b(z):=\varphi(z/b)$, $z\in\mr$. Then $\varphi_b$ is the characteristic function of a distribution with density $g_b$. Using this in combination with \eqref{eq:12} we infer, for $j\in\mn$ and $b>0$,
$$
\int_\mr \eee^{{\rm i}zx}\int_\mr v_{j,\,\delta}((x-y)a(j))g_b(y){\rm d}y{\rm d}x=\frac{1}{a(j)}\me [\eee^{{\rm i}z\bar X_{j,\,t}/a(j)}]\psi_\delta(z/a(j))\varphi_b(z),\quad z\in\mr.
$$
By the Fourier inversion,
$$
\int_\mr v_{j,\,\delta}((x-y)a(j))g_b(y){\rm d}y=\frac{1}{2\pi a(j)}\int_{-b}^b \eee^{-{\rm i}zx}\me [\eee^{{\rm i}z\bar X_{j,\,t}/a(j)}]\psi_\delta(z/a(j))\varphi_b(z){\rm d}z,\quad x\in\mr.
$$
Equivalently, for $x\in\mr$,
\begin{multline}\label{eq:local_clt_proof1}
\frac{a(j)}{\delta}\int_\mr \mmp\{\bar X_{j,\,t}\in [(x-y)a(j), (x-y)a(j)+\delta)\}g_b(y){\rm d}y\\
=\frac{1}{2\pi}\int_{-b}^b \eee^{-{\rm i}zx}\me [\eee^{{\rm i}z\bar X_{j,\,t}/a(j)}]\psi_\delta(z/a(j))\varphi_b(z){\rm d}z.
\end{multline}

We shall prove that, with $b=b(j)=\eee^{cj}$ for some $c>0$ to be specified below and $\delta=\delta(j)$, where $(\delta(j))_{j\in\mn}$ is any sequence of positive numbers satisfying $\delta(j)=o(a(j))$ as $j\to\infty$,
\begin{equation}\label{eq:inter}
\lim_{j\to\infty}\sup_{x\in\mr}\Big|\frac{a(j)}{\delta(j)}\int_\mr \mmp\{\bar X_{j,\,t}\in [(x-y)a(j), (x-y)a(j)+\delta(j))\}g_{b(j)}(y){\rm d}y-p(0,1;x)\Big|=0.
\end{equation}
As a preparation, for each $j\in\mn$, let $X^\prime_{j,t}$ be a nonnegative random variable with the Laplace transform
\begin{equation}\label{eq:x_prime_def}
\me [\eee^{-uX^\prime_{j,t}}]=\frac{\hat V(\kappa(j)+u)}{\hat V(\kappa(j))},\quad u\geq 0.
\end{equation}
%Note that
Formula~\eqref{eq:change meas_laplace} implies that $\bar{X}_{j,t}$ is the sum of $j$ independent copies of $t/j-X^\prime_{j,t}$. Thus, according to Lemma 1 on p.~109 in~\cite{Petrov} we have
\begin{equation}\label{eq:berry}
\big|\me [\eee^{{\rm i}z\bar X_{j,\,t}/a(j)}]-\eee^{-z^2/2}\big|\leq 16|z|^3\eee^{-z^2/3} L_j,\quad |z|\leq (4L_j)^{-1},
\end{equation}
where
$$
L_j:=\frac{1}{j^{1/2}}\me \left[\left|\frac{X^\prime_{j,\,t}-t/j}{(\lambda^{\prime\prime}(\kappa(j)))^{1/2}}\right|^3\right].
$$
Now we check that under the present assumptions
\begin{equation}\label{eq:third_moment_to_zero}
\lim_{j\to\infty} L_j=0. 
\end{equation}
Indeed,
\begin{multline*}
\me \left[\left|\frac{X^\prime_{j,\,t}-t/j}{(\lambda^{\prime\prime}(\kappa(j)))^{1/2}}\right|^3\right]\leq \frac{4((t/j)^3+\me [(X^\prime_{j,\,t})^{3}])}{(\lambda''(\kappa(j)))^{3/2}}\\
=4\left(\frac{|\lambda'(\kappa(j))|}{(\lambda''(\kappa(j)))^{1/2}}\right)^3+ \frac{4|\widehat{V}'''(\kappa(j))|}{(\lambda''(\kappa(j)))^{3/2}\widehat{V}(\kappa(j))}=
\frac{4 T_j}{\sqrt{\lambda''(\kappa(j))}},
\end{multline*}
and the right-hand side is $o(j^{1/2})$ as $j\to+\infty$ in view of~\eqref{eq:for BerryEsseen}. This proves~\eqref{eq:third_moment_to_zero}. As a consequence of~\eqref{eq:third_moment_to_zero}, 
$X_{j,\,t}$ satisfies the central limit theorem
$$
\frac{X_{j,\,t}-t}{a(j)}~\dodn~{\rm N}(0,1),\quad j\to\infty,
$$
where ${\rm N}(0,1)$ is a random variable with the standard normal law.

Using $p(0,1;x)=(2\pi)^{-1}\int_\mr \eee^{-{\rm i}zx}\eee^{-z^2/2}{\rm d}z$ for $x\in\mr$ we obtain from~\eqref{eq:local_clt_proof1}
\begin{align*}
&\hspace{-1cm}2\pi \sup_{x\in\mr}\Big|\frac{a(j)}{\delta(j)}\int_\mr \mmp\{\bar X_{j,\,t}\in [(x-y)a(j), (x-y)a(j)+\delta(j))\}g_{b(j)}{\rm d}y-p(0,1;x)\Big|\\
&=\sup_{x\in\mr}\Big|\int_{-b(j)}^{b(j)} \eee^{-{\rm i}zx}\me [\eee^{{\rm i}z\bar X_{j,\,t}/a(j)}]\psi_{\delta(j)}(z/a(j))\varphi_{b(j)}(z){\rm d}z-\int_\mr \eee^{-{\rm i}zx}\eee^{-z^2/2}{\rm d}z\Big|\\
&\leq \int_{-(4L_j)^{-1}}^{(4L_j)^{-1}}\big|\me [\eee^{{\rm i}z\bar X_{j,\,t}/a(j)}]\psi_{\delta(j)}(z/a(j))\varphi_{b(j)}(z)-\eee^{-z^2/2}\big|{\rm d}z\\
&\hspace{3cm}+\int_{(4L_j)^{-1}<|z|\leq b(j)}|\me [\eee^{{\rm i}z\bar X_{j,\,t}/a(j)}]|{\rm d}z+\int_{|z|>(4L_j)^{-1}}\eee^{-z^2/2}{\rm d}z\\
&=:A(j)+B(j)+C(j).
\end{align*}
To analyze $A(j)$, write, for $|z|\leq (4L_j)^{-1}$,
\begin{align*}
&\hspace{-1cm}\big|\me \eee^{{\rm i}z\bar X_{j,\,t}/a(j)}\psi_{\delta(j)}(z/a(j))\varphi_{b(j)}(z)-\eee^{-z^2/2}\big|\\
&\leq \big|\me [\eee^{{\rm i}z\bar X_{j,\,t}/a(j)}]-\eee^{-z^2/2}\big|+|\varphi_{b(j)}(z)-1|\eee^{-z^2/2}+\big|\psi_{\delta(j)}(z/a(j))-1\big|\eee^{-z^2/2}\\
&\leq 16|z|^3\eee^{-z^2/3}L_j+|z|\eee^{-z^2/2}/b(j)+|z|\eee^{-z^2/2}\delta(j)/(2a(j)).
\end{align*}
We have used~\eqref{eq:berry} to obtain the first term on the right-hand side. The third term is a consequence of the inequality $|1-\eee^{-{\rm i}x}-{\rm i}x|\leq x^2/2$ for $x\in\mr$, see Lemma 1 on p.~512 in~\cite{Feller_Vol2}. Thus, recalling also~\eqref{eq:third_moment_to_zero},
$$
A(j)\leq L_j\int_\mr |z|^3\eee^{-z^2/3}{\rm d}z+\big(\eee^{-cj}+\delta(j)/(2a(j))\big)\int_\mr |z|\eee^{-z^2/2}{\rm d}z~\to~ 0,\quad j\to\infty.
$$
Now we work with $B(j)$. As we have shown, $L_j\leq 4T_j/a(j)$. Thus,
\begin{multline*}
B(j)=\int_{(4L_j)^{-1} <|u|\leq b(j)}\Big|\frac{\hat V(\kappa(j)-{\rm i}u/a(j))}{\hat V(\kappa(j))} \Big|^j{\rm d}u\leq 2b(j) \sup_{|u|\geq (4L_j a(j))^{-1}}\Big|\frac{\hat V(\kappa(j)-{\rm i}u)}{\hat V(\kappa(j))}\Big|^j\\
\leq 2b(j) \sup_{|u|\geq (16 T_j)^{-1}}\Big|\frac{\hat V(\kappa(j)-{\rm i}u)}{\hat V(\kappa(j))}\Big|^j.
\end{multline*}
By virtue of ~\eqref{eq:strongnonlattice}, for large $j\in\mn$,
$$
\sup_{|u|\geq 1/16}\Big|\frac{\hat V(\kappa(j)-{\rm i}u / T_j)}{\hat V(\kappa(j))}\Big|\leq \eee^{-c_1}
$$ for some $c_1>0$. Hence, recalling that $b(j)=\eee^{cj}$ we obtain $B(j)\leq 2\eee^{(c-c_1)j}\to 0$ as $j\to\infty$ whenever $c\in (0,c_1)$. Since $\lim_{j\to\infty}L_j=0$, we immediately conclude that $\lim_{j\to\infty} C(j)=0$. Combining fragments together we arrive at~\eqref{eq:inter}.

Let $(\rho(j))_{j\in\mn}$ be a sequence of positive numbers tending to zero, to be specified below. Replacing in~\eqref{eq:inter} first $\delta(j)$ with $\delta(j)(1\pm 2\rho(j))$ and then $x$ with % by 
$x\mp \rho(j)\delta(j)/a(j)$ yields
\begin{multline*}
\lim_{j\to\infty}\sup_{x\in\mr}\Big|\frac{a(j)}{(1\pm2\rho(j))\delta(j)}\int_\mr \mmp\{\bar X_{j,\,t}\in [(x-y)a(j)\mp\rho(j)\delta(j), (x-y)a(j)+(1\pm\rho(j))\delta(j))\}\\
\times g_{b(j)}(y){\rm d}y-p(0,1;x\mp \rho(j)\delta(j)/a(j))\Big|=0.
\end{multline*}
Thus, given $\varepsilon>0$, for all $x\in\mr$ and large enough $j$,
\begin{multline*}
D_j(x):=\frac{a(j)}{\delta(j)}\int_\mr \mmp\{\bar X_{j,\,t}\in [(x-y)a(j)+\rho(j)\delta(j), (x-y)a(j)+ (1-\rho(j))\delta(j))\}g_{b(j)}(y){\rm d}y\\
\geq (1-2\rho(j))p(0,1;x+\rho(j)\delta(j)/a(j))-\varepsilon.
\end{multline*}
Observe that, for any $x\in\mr$, $$[x-y+\rho(j)\delta(j)/a(j), x-y+(1-\rho(j))\delta(j)/a(j))\subseteq [x, x+\delta(j)/a(j))$$ whenever $|y|\leq \rho(j)\delta(j)/a(j)$. This entails
\begin{multline}\label{eq:local_clt_proof2}
D_j(x)\leq \frac{a(j)}{\delta(j)}\mmp\{\bar X_{j,\,t}\in [xa(j), xa(j)+\delta(j))\}\int_{|y|\leq \rho(j)\delta(j)/a(j)}g_{b(j)}(y){\rm d}y\\+\frac{a(j)}{\delta(j)}\int_{|y|>\rho(j)\delta(j)/a(j)}g_{b(j)}(y){\rm d}y.
\end{multline}
Recall that so far the sequence $(\delta(j))_{j\in\mn}$ has been % was 
an arbitrary sequence satisfying $\delta(j)=o(a(j))$ as $j\to\infty$. We shall now also exploit the assumption that $\delta(j)/a(j)$ decays subexponentially. Let $(\rho(j))_{j\in\mn}$ be defined by
\begin{equation}\label{eq:local_clt_proof3}
\frac{a(j)}{(\rho(j))^2\delta(j)}=b(j)=\eee^{cj},\quad j\in\mn.
\end{equation}
Such a choice and subexponential decay of $\delta(j)/a(j)$ guarantees $\lim_{j\to\infty}\rho(j)=0$. 
With this choice of $\rho$ and using that $g_{b(j)}(y)=b(j)g(b(j)y)$ we conclude from~\eqref{eq:local_clt_proof2} that
$$
D_j(x)\leq\frac{a(j)}{\delta(j)}\mmp\{\bar X_{j,\,t}\in [xa(j), xa(j)+\delta(j))\}\int_{|y|\leq 1/\rho(j)}g(y){\rm d}y\\+\frac{a(j)}{\delta(j)}\int_{|y|>1/\rho(j)}g(y){\rm d}y
$$
and thereupon
\begin{multline}
\frac{a(j)}{\delta(j)}\mmp\{\bar X_{j,\,t}\in [xa(j), xa(j)+\delta(j))\}\\\geq \Big((1-2\rho(j))p(0,1;x+\rho(j)\delta(j)/a(j))-\varepsilon-\frac{a(j)}{\delta(j)}\int_{|y|>1/\rho(j)}g(y){\rm d}y\Big)\Big(\int_{|y|\leq 1/\rho(j)}g(y){\rm d}y\Big)^{-1}.\label{eq:half}
\end{multline}
Uniform continuity of $p$ in combination with $\lim_{j\to\infty}\big(\rho(j)\delta(j)/a(j)\big)=0$ entails $$\lim_{j\to\infty}\sup_{x\in\mr}|p(0,1;x+\rho(j)\delta(j)/a(j))-p(0,1;x)|=0.$$ Since $\rho$ converges to 0 and $g$ is the density of a probability distribution, $\lim_{j\to\infty}\int_{|y|\leq 1/\rho(j)}g(y){\rm d}y=1$. In view of $\int_{|y|>1/\rho(j)}g(y){\rm d}y \leq 4\pi^{-1}\int_{1/\rho(j)}^\infty y^{-2}{\rm d}y=4\pi^{-1}\rho(j)$ and the facts that $\rho$ exhibits an exponential decay, whereas $a(j)/\delta(j)$ grows subexponentially, we conclude that $$\lim_{j\to\infty}\frac{a(j)}{\delta(j)}\int_{|y|>1/\rho(j)}g(y){\rm d}y=0.$$ 
Letting now in \eqref{eq:half} $j\to\infty$ and then $\varepsilon\to 0+$ we obtain that, uniformly in $x\in\mr$, $$\liminf_{j\to\infty} \frac{a(j)}{\delta(j)}\mmp\{\bar X_{j,\,t}\in [xa(j), xa(j)+\delta(j))\}\ge p(0,1;x).$$ The converse inequality for the upper limit
follows analogously but simpler.

Relation~\eqref{eq:Stone} is essentially Stone's local limit theorem (Corollary 1 in~\cite{Stone:1965}). In view of this we omit a proof. The only thing that needs to be mentioned is that nonarithmeticity of $V$ is equivalent to the inequality $$\bigg| \frac{\hat{V}(\theta-{\rm i}u)}{\hat{V}(\theta)}\bigg|<1,\quad u\in\mr\setminus\{0\}.$$

The proof of Proposition~\ref{prop:local} is complete.

\begin{remark}\label{rem:nonarithmeticity}
Recall that a probability measure with the characteristic function $\phi$ is called strongly nonlattice, 
if
$$
\sup_{|u|>\gamma}|\phi(u)|<1
$$
for every fixed $\gamma>0$, see, for instance, equation (3) in~\cite{Stone:1965}. Our assumption~\eqref{eq:strongnonlattice} is equivalent to
\begin{equation}\label{eq:nonlattice}
\limsup_{j\to+\infty}\sup_{|u|>\gamma}|\me [\eee^{iu (X_{j,t}^{\prime}-t/j)/T_j}]|<1
\end{equation}
for every fixed $\gamma>0$, where the random variables $X_{j,t}^{\prime}$ are as defined in~\eqref{eq:x_prime_def}. Thus,~\eqref{eq:strongnonlattice} can be regarded as the assumption that the distributions of the normalized random variables $(X_{j,t}^{\prime}-t/j)/T_j$ are asymptotically strongly nonlattice as $j\to\infty$.
\end{remark}

\subsection{Proof of Proposition~\ref{prop:expon}}

Suppose that the assumptions of part (a) of Theorem~\ref{thm:main} hold. Put $\delta(j)=\theta /\kappa(j)$ for a fixed $\theta>0$ and all $j\in\mn$. The so defined $\delta(j)$ satisfies the assumption of part (i) of Proposition~\ref{prop:local}. We use a decomposition
\begin{multline*}
\kappa(j)a(j)\me [\eee^{-\kappa(j)\bar X_{j,\,t}}\1_{\{\bar X_{j,\,t}\geq 0\}}]=\kappa(j)a(j)\int_{[0,\,\theta a(j))}\eee^{-\kappa(j)y}{\rm d}\mmp\{\bar X_{j,\,t}\leq y\}\\+\kappa(j)a(j)\int_{[\theta  a(j),\,\infty)}\eee^{-\kappa(j)y}{\rm d}\mmp\{\bar X_{j,\,t}\leq y\}=:K(j)+L(j).
\end{multline*}
In view of $\lim_{j\to\infty}\kappa(j)a(j)=\infty$,
$$
L(j)\leq \kappa(j)a(j)\eee^{-\theta \kappa(j) a(j)}~\to~0,\quad j\to\infty.
$$
We proceed by writing
\begin{align*}
K(j)&\leq \kappa(j)a(j)\sum_{i=0}^{\lfloor \kappa(j)a(j)\rfloor-1}\int_{[i\theta, (i+1)\theta)}\eee^{-y}{\rm d}\mmp\{\bar X_{j,\,t}\leq y/\kappa(j)\}\\
&\leq \theta\sum_{i=0}^{\lfloor \kappa(j)a(j)\rfloor-1}\eee^{-i\theta}\Big(\frac{a(j)}{\delta(j)}\mmp\{\bar X_{j,t}\in [i\delta(j), (i+1)\delta(j))\}-p(0,1;i\delta(j)/a(j))\Big)\\
&\hspace{0.6cm}+\theta \sum_{i=0}^{\lfloor \kappa(j)a(j)\rfloor-1}\eee^{-i\theta}\big(p(0,1;i\delta(j)/a(j))-p(0,1;0)\big)+p(0,1;0)\theta\sum_{i=0}^{\lfloor a(j)\kappa(j)\rfloor-1}\eee^{-i\theta}\\
&=:K_1(j,\theta)+K_2(j,\theta)+K_3(j,\theta).
\end{align*}
Note that
$$
\lim_{j\to\infty}K_3(j,\theta)=p(0,1;0)\frac{\theta}{1-\eee^{-\theta}}\to p(0,1;0),\quad \theta\to 0+.
$$
Further, given $\varepsilon>0$ and picking $\theta>0$ sufficiently small, we obtain, for large $j\in\mn$ and $i\in [0, \lfloor \kappa(j)a(j)\rfloor-1]$,
$$
|p(0,1;i\delta(j)/a(j))-p(0,1;0)|\leq \varepsilon.
$$
This yields $\limsup_{\theta\to 0+}\limsup_{j\to\infty}|K_2(j,\theta)|\leq \varepsilon$. Therefore, $\limsup_{\theta\to 0+}\limsup_{j\to\infty}|K_2(j,\theta)|=0$, since $K_2(j,\theta)$ does not depend on $\varepsilon$. Finally, by Proposition~\ref{prop:local}, given $\varepsilon>0$, for large $j\in\mn$ and all $i\in [0,\lfloor \kappa(j)a(j)\rfloor-1]$,
$$
\Big|\frac{a(j)}{\delta(j)}\mmp\{\bar X_{j,\,t}\in [i\delta(j), (i+1)\delta(j))\}-p(0,1;i\delta(j)/a(j))\Big|\leq \varepsilon.
$$
As a consequence, $\lim_{j\to\infty}K_1(j,\theta)=0$.

We have proved that $\limsup_{j\to\infty} (K(j)+L(j))\leq p(0,1;0)=(2\pi)^{-1/2}$. The converse inequality for the lower limit follows analogously. The proof of Proposition~\ref{prop:expon} is complete under the assumptions of part (a) of Theorem \ref{thm:main}.

Under the assumptions of part (b) of Theorem~\ref{thm:main}, $\lim_{j\to\infty}\kappa(j)=\theta\in (0,\infty)$ and $\kappa(j)a(j)\to \infty$ holds automatically. The proof goes by exactly the same reasoning as in case (a), and we omit further details.

\subsection{Proofs of Corollaries~\ref{cor:lin_growth} and~\ref{cor:clt}.}

\begin{proof}[Proof of Corollary~\ref{cor:lin_growth}]
Note that $t/j\to \alpha>s_{-}$ as $j\to\infty$ which entails $\lim_{j\to\infty}\kappa(j)=\theta$.

We start by proving \eqref{eq:cor_j_linear_growth}. According to part (b) of Theorem~\ref{thm:main}, it suffices to check that under the assumption $y(j)=o(j^{2/3})$
\begin{equation*}
(\hat{V}(\kappa(j)))^j\eee^{t\kappa(j)}~\sim~(\hat{V}(\theta))^j \eee^{\theta t}\eee^{-\frac{y^2}{2\sigma^2j}}, \quad j\to\infty.
\end{equation*}
Put $\widetilde{\kappa}(j)=\theta-\kappa(j)$. Using \eqref{eq:kappa_def} and Taylor's expansion we obtain
\begin{multline*}
t/j=\alpha+y/j=-\lambda^{\prime}(\theta-\widetilde{\kappa}(j))=-\lambda'(\theta)+\sigma^2\widetilde{\kappa}(j)+O((\widetilde{\kappa}(j))^2)\\
=\alpha+\sigma^2\widetilde{\kappa}(j)+O((\widetilde{\kappa}(j))^2),\quad j\to\infty.
\end{multline*}
Thus,
$$
\widetilde{\kappa}(j)+O((\widetilde{\kappa}(j))^2)=\frac{y}{\sigma^2 j}
$$
and thereupon
\begin{equation}\label{eq:cor_proof1}
\widetilde{\kappa}(j)=\frac{y}{\sigma^2 j}+O\left(\frac{y^2}{j^2}\right),\quad j\to\infty.
\end{equation}
It remains to show that
\begin{equation}\label{eq:cor_proof2}
j\lambda(\theta-\widetilde{\kappa}(j))+t(\theta-\widetilde{\kappa}(j))
=\theta t+\lambda(\theta)j-\frac{y^2}{2\sigma^2 j}+o(1),\quad j\to\infty.
\end{equation}
This follows by another application of Taylor's expansion
\begin{align*}
&\hspace{-0.1cm} %j(\lambda(\theta-\widetilde{\kappa}(j))-(\theta-\widetilde{\kappa}(j))\lambda'(\theta-\widetilde{\kappa}(j)))\\
%&~=~
j\lambda(\theta-\widetilde{\kappa}(j))+t(\theta-\widetilde{\kappa}(j))\\
&~=~j\left(\lambda(\theta)-\lambda'(\theta)\widetilde{\kappa}(j)+\frac{\sigma^2}{2}(\widetilde{\kappa}(j))^2+O\left((\widetilde{\kappa}(j))^3\right)\right)+\theta t -t\widetilde{\kappa}(j)\\
&~\overset{\eqref{eq:cor_proof1}}{=}~\theta t + \lambda(\theta)j-y\widetilde{\kappa}(j)+\frac{\sigma^2j}{2}(\widetilde{\kappa}(j))^2
+O\left(\frac{y^3}{j^2}\right)\\
&~\overset{\eqref{eq:cor_proof1}}{=}~\theta t + j\lambda(\theta)-\frac{y^2}{\sigma^2 j}+\frac{j\sigma^2}{2}\left(\frac{y^2}{\sigma^4j^2}+O\left(\frac{y^3}{j^3}\right)\right)+O\left(\frac{y^3}{j^2}\right)~=~\theta t+j\lambda(\theta)-\frac{y^2}{2j\sigma^2}+o(1).
\end{align*}
Here, $O(y^3/j^2)=o(1)$ is secured by the assumption $y(j)=o(j^{2/3})$. This finishes the proof of both~\eqref{eq:cor_proof2} and \eqref{eq:cor_j_linear_growth}.

The proof of \eqref{eq:cor_j_linear_growth2} goes the same path and only requires an extra term in Taylor's expansions. A counterpart of \eqref{eq:cor_proof1} is
\begin{equation*}
\widetilde{\kappa}(j)=\frac{y}{\sigma^2 j}+\frac{\lambda^{\prime\prime\prime}(\theta)y^2}{2\sigma^6j^2}+o\left(\frac{y^2}{j^2}\right),\quad j\to\infty.
\end{equation*}
With this at hand,
\begin{align*}
&\hspace{-1cm} j\lambda(\theta-\widetilde{\kappa}(j))+t(\theta-\widetilde{\kappa}(j))\\
&~=~j\left(\lambda(\theta)-\lambda'(\theta)\widetilde{\kappa}(j)+\frac{\sigma^2}{2}(\widetilde{\kappa}(j))^2-\frac{\lambda^{\prime\prime\prime}(\theta)}{6}(\widetilde{\kappa}(j))^3+O\left((\widetilde{\kappa}(j))^4\right)
\right)+\theta t -t\widetilde{\kappa}(j)\\
%&~= ~\theta t + \lambda(\theta)j-y\widetilde{\kappa}(j)+\frac{j\sigma^2}{2}(\widetilde{\kappa}(j))^2-\frac{\lambda^{\prime\prime\prime}(\theta)j}{6}(\widetilde{\kappa}(j))^3+O\left(\frac{y^3}{j^2}\right)\\
&~=~\theta t + \lambda(\theta)j-\frac{y^2}{\sigma^2 j}-\frac{\lambda^{\prime\prime\prime}(\theta)y^3}{2\sigma^6j^2}+\frac{\sigma^2j}{2}\left(\frac{y^2}{\sigma^4j^2}+\frac{\lambda^{\prime\prime\prime}(\theta)y^3}{\sigma^8 j^3}\right)-\frac{\lambda^{\prime\prime\prime}(\theta)y^3}{6\sigma^6 j^2}+o(1)\\
&~=~\theta t+\lambda(\theta)j-\frac{y^2}{2\sigma^2j}-\frac{c^3\lambda^{\prime\prime\prime}(\theta)}{6\sigma^6}+o(1),\quad j\to\infty.
\end{align*}
The proof of Corollary \ref{cor:lin_growth} is complete.
\end{proof}
\begin{proof}[Proof of Corollary~\ref{cor:clt}]
First of all, note that the function $t(j,y)$ exists (at least for large enough $j$), for~\eqref{eq:threshold} implies that, for large enough $t>0$, $t/j\geq
-\lambda'(\theta^{\ast})\in (s_{-},s_{+})$.

Formula~\eqref{eq:kappa_def} implies that
$$
\kappa(j)=\theta^{\ast}-\frac{\log j - y +\varepsilon(j)}{2j\theta^{\ast}\lambda''(\theta^{\ast})}=:\theta^{\ast}-\widetilde{\kappa}(j).
$$
In particular, $\lim_{j\to\infty} \kappa(j)=\theta^\ast$. According to part (b) of Theorem~\ref{thm:main}, it suffices to check that under the assumption of Corollary~\ref{cor:clt}
\begin{equation}\label{cor2:proof1}
\lim_{j\to\infty}\frac{(\hat{V}(\kappa(j)))^j\eee^{t\kappa(j)}}{j^{1/2}}=\lim_{j\to\infty}\frac{\eee^{j(\lambda(\kappa(j))-\kappa(j)\lambda'(\kappa(j))})}{j^{1/2}}=\eee^{-y/2}.
\end{equation}
Using Taylor's expansion yields 
\begin{align*}
&\hspace{-1cm}\lambda(\kappa(j))-\kappa(j)\lambda'(\kappa(j))=\lambda(\theta^{\ast}-\widetilde{\kappa}(j))-(\theta^{\ast}-\widetilde{\kappa}(j))\lambda'(\theta^{\ast}-\widetilde{\kappa}(j))\\
&=\lambda(\theta^{\ast})-\lambda'(\theta^{\ast})\widetilde{\kappa}(j)-(\theta^{\ast}-\widetilde{\kappa}(j))(\lambda'(\theta^{\ast})-\widetilde{\kappa}(j)\lambda''(\theta^{\ast}))+O\left(\frac{(\log j)^2}{j^2}\right)\\
&=\theta^{\ast}\sigma^2\widetilde{\kappa}(j)+O\left(\frac{(\log j)^2}{j^2}\right),\quad j\to\infty.
\end{align*}
This entails 
\begin{multline*}
-2^{-1}\log j + j(\lambda(\kappa_j(t))-\kappa_j(t)\lambda'(\kappa_j(t)))\\
=-2^{-1}\log j + \theta^{\ast}\sigma^2 j\widetilde{\kappa}(j)+O\left(\frac{(\log j)^2}{j}\right)=-y/2+o(1),\quad j\to\infty
\end{multline*}
and thereupon~\eqref{cor2:proof1}.
\end{proof}

\section{Appendix}

The following result is known if $V$ is a distribution function, see, for example, Chapter 9.1.2 in~\cite{Borovkov}. We have not been able to locate in the literature its version for arbitrary nondecreasing unbounded functions. Therefore, we give a proof.

\begin{lemma}\label{lem:s_minus}
Let $V:\mr\to[0,\infty)$ be a right-continuous and nondecreasing function vanishing on $(-\infty,0)$. Put $x_0:=\inf\{x>0:\,V(x)>0\}$ and $\lambda(s)=\log \int_{[0,\infty)}\eee^{-sx}{\rm d}V(x)$ for $s\in\mathcal{D}$. Then
$$
s_{-}=\inf_{s\in\mathcal{D}}(-\lambda'(s))=\lim_{s\to+\infty}(-\lambda'(s))=x_0.
$$
\end{lemma}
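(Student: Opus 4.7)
The plan is to work directly with the formula $-\lambda'(s) = N(s)/D(s)$ where $N(s) := \int_{[0,\infty)} x\eee^{-sx}{\rm d}V(x)$ and $D(s) := \hat V(s)$, and to show by a dominated-convergence-style argument that the exponential weight $\eee^{-sx}$ concentrates all mass near $x_0$ as $s\to\infty$. First I would observe that from the definition of $x_0$ together with the monotonicity of $V$, the function $V$ vanishes on $(-\infty, x_0)$, so the Lebesgue--Stieltjes measure ${\rm d}V$ is supported on $[x_0,\infty)$. Differentiating under the integral sign (justified for $s\in {\rm Int}\,\mathcal{D}$) gives $-\lambda'(s) = N(s)/D(s) \geq x_0$ for every such $s$, because $x\geq x_0$ on the support. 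Strict convexity of $\lambda$, already recalled in the paper, makes $-\lambda'$ strictly decreasing, so $\inf_{s\in\mathcal{D}}(-\lambda'(s)) = \lim_{s\to\infty}(-\lambda'(s))$ (recall $\mathcal{D}$ is unbounded above). Hence the whole lemma reduces to proving that this limit equals $x_0$.

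Since the lower bound $-\lambda'(s)\geq x_0$ is already in hand, the heart of the proof is the matching upper bound $\limsup_{s\to\infty}(-\lambda'(s))\leq x_0$. I would translate by $x_0$: setting $W(u):=V(x_0+u)$ for $u\geq 0$, a change of variables yields
$$
-\lambda'(s)-x_0 = \frac{\int_{[0,\infty)} u\eee^{-su}\,{\rm d}W(u)}{\int_{[0,\infty)} \eee^{-su}\,{\rm d}W(u)}.
$$
Fix $\varepsilon>0$ and split the numerator at $u=\varepsilon$. The portion over $[0,\varepsilon)$ is at most $\varepsilon$ times the denominator, and already contributes the $\varepsilon$ that survives in the limsup. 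Everything hinges on showing that the tail portion $\int_{[\varepsilon,\infty)} u\eee^{-su}\,{\rm d}W(u)$ is negligible compared to the denominator.

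For the tail bound I would pick some $s_0\in\mathcal{D}$ and write $u\eee^{-su} = u\eee^{-(s-s_0)u}\eee^{-s_0 u}$. Once $s$ is large enough that $1/(s-s_0)<\varepsilon$, the map $u\mapsto u\eee^{-(s-s_0)u}$ is decreasing on $[\varepsilon,\infty)$, so it is bounded there by $\varepsilon\eee^{-(s-s_0)\varepsilon}$. This gives
$$
\int_{[\varepsilon,\infty)} u\eee^{-su}\,{\rm d}W(u)\;\leq\; \varepsilon\eee^{-(s-s_0)\varepsilon}\int_{[0,\infty)}\eee^{-s_0u}\,{\rm d}W(u) \;=\; C_0\,\varepsilon\eee^{-(s-s_0)\varepsilon},
$$
with $C_0=\eee^{s_0 x_0}\hat V(s_0)<\infty$. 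For the denominator I would use that by definition of $x_0$ one has $W(\varepsilon/2)=V(x_0+\varepsilon/2)>0$, so
$$
\int_{[0,\infty)} \eee^{-su}\,{\rm d}W(u) \;\geq\; \int_{[0,\varepsilon/2]} \eee^{-su}\,{\rm d}W(u) \;\geq\; \eee^{-s\varepsilon/2}W(\varepsilon/2).
$$

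Combining these two estimates, the tail contribution to $-\lambda'(s)-x_0$ is bounded by a constant times $\eee^{-s\varepsilon/2}$, which tends to $0$ as $s\to\infty$. Hence $\limsup_{s\to\infty}(-\lambda'(s)-x_0)\leq \varepsilon$, and letting $\varepsilon\downarrow 0$ finishes the proof. The main obstacle I anticipate is the tail bound: one needs to balance the decay $\eee^{-(s-s_0)\varepsilon}$ of the tail against the only available lower bound $\eee^{-s\varepsilon/2}W(\varepsilon/2)$ on the denominator, and the trick is to use a fixed $s_0\in\mathcal{D}$ to absorb the heavy part of ${\rm d}W$ into a single finite constant $C_0$ while letting the remaining exponent $-(s-s_0)\varepsilon + s\varepsilon/2$ send the ratio to $0$.
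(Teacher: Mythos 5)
Your proof is correct and follows essentially the same route as the paper's: translate by $x_0$ to reduce to the case $x_0=0$, split the integrals at $\varepsilon$, lower-bound the denominator by $\eee^{-s\varepsilon/2}$ times the positive mass on $[0,\varepsilon/2]$, and show the tail of the numerator decays faster. The only cosmetic difference is in how the tail is dominated: you absorb it into a fixed finite constant $\eee^{s_0x_0}\hat V(s_0)$ with explicit exponential decay, whereas the paper extracts $\eee^{-s\varepsilon/2}$ and sends the remaining integral to zero by monotone convergence.
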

\begin{proof}
The second equality follows from the monotonicity of $-\lambda'$. For the proof of the last equality we distinguish two cases: $x_0=0$ and $x_0>0$.

\noindent
{\sc Case $x_0=0$.} Fix $\varepsilon>0$ and note that $\int_{[0,\varepsilon/2]}{\rm d}V(x)=:K(\varepsilon)>0$. Since
$$
\frac{\int_{(\varepsilon,\infty)}\eee^{-sx}{\rm d}V(x)}{\int_{[0,\infty)}\eee^{-sx}{\rm d}V(x)}\leq
\frac{\eee^{-s\varepsilon/2}\int_{(0,\infty)}\eee^{-sx/2}{\rm d}V(x)}{\int_{[0,\varepsilon/2]}\eee^{-sx}{\rm d}V(x)}\leq \frac{\eee^{-s\varepsilon/2}\int_{(0,\infty)}\eee^{-sx/2}{\rm d}V(x)}{\eee^{-s \varepsilon/2}K(\varepsilon)}~\to~0,\quad s\to\infty
$$
by the monotone convergence theorem, we conclude that,
$$
\int_{[0,\infty)}\eee^{-sx}{\rm d}V(x)~\sim~\int_{[0,\varepsilon]}\eee^{-sx}{\rm d}V(x),\quad s \to \infty.
$$
Similarly, one can check that
$$
\int_{[0,\infty)}x\eee^{-sx}{\rm d}V(x)~\sim~\int_{[0,\varepsilon]}x\eee^{-sx}{\rm d}V(x),\quad s \to \infty
$$
for every fixed $\varepsilon>0$. Thus,
$$
\lim_{s\to+\infty}(-\lambda'(s))=\lim_{s\to+\infty}\frac{\int_{[0,\infty)}x\eee^{-sx}{\rm d}V(x)}{\int_{[0,\infty)}\eee^{-sx}{\rm d}V(x)}=\lim_{s\to+\infty}\frac{\int_{[0,\varepsilon]}x\eee^{-sx}{\rm d}V(x)}{\int_{[0,\varepsilon]}\eee^{-sx}{\rm d}V(x)}\leq \varepsilon.
$$
Since $\varepsilon>0$ is arbitrary we conclude that $\lim_{s\to+\infty}(-\lambda'(s))=0=x_0$.

\noindent
{\sc Case $x_0>0$.} This situation can be reduced to the previous one by passing to the function $V_1(x):=V(x+x_0)$. Indeed,
$$
\lambda(s)=\log \int_{[0,\infty)}\eee^{-sx}{\rm d}V(x)=\log \int_{[x_0,\infty)}\eee^{-sx}{\rm d}V(x)=\log \int_{[0,\infty)}\eee^{-s(x+x_0)}{\rm d}V_1(x)=:-s x_0 + \lambda_1(s)
$$
and thereupon
$$
\inf_{s\in\mathcal{D}}(-\lambda'(s))=x_0+\inf_{s\in\mathcal{D}}(-\lambda_1'(s))=x_0
$$
according to the already considered case $x_0=0$.
\end{proof}

\noindent \textbf{Acknowledgment.} This work was mainly done during numerous visits of A.~Ik\-san\-ov and A.~Ma\-ry\-nych to Wroclaw in 2019-2023. They gratefully acknowledge financial support and hospitality.

\end{document}